\documentclass[a4paper, 12pt]{amsart}
%\usepackage{showkeys}

%\usepackage{etoolbox}
%\makeatletter
%\let\ams@starttoc\@starttoc
%\makeatother
%\makeatletter
%\let\@starttoc\ams@starttoc
%\patchcmd{\@starttoc}{\makeatletter}{\makeatletter\parskip\z@}{}{}
%\makeatother

%\usepackage[parfill]{parskip}
\usepackage{vmargin}
\usepackage[colorlinks=true,linkcolor=blue,citecolor=blue,urlcolor=blue]{hyperref}
\usepackage{bookmark}
\usepackage{amsthm,thmtools,amssymb,amsmath,amscd,amsfonts}
\usepackage{mathrsfs}
\usepackage{stmaryrd}

\usepackage{fancyhdr}
\usepackage{esint}

\usepackage{enumerate}

\usepackage{pictexwd,dcpic}

\usepackage{graphicx}
\usepackage[utf8]{inputenc}

\declaretheorem[name=Theorem,numberwithin=section]{thm}
\declaretheorem[name=Remark,style=remark,sibling=thm]{rem}
\declaretheorem[name=Lemma,sibling=thm]{lemma}
\declaretheorem[name=Proposition,sibling=thm]{prop}
\declaretheorem[name=Definition,style=definition,sibling=thm]{defn}

\numberwithin{equation}{section}

\usepackage{cleveref}
\crefname{lemma}{Lemma}{Lemmata}
\crefname{prop}{Proposition}{Propositions}
\crefname{thm}{Theorem}{Theorems}
\crefname{cor}{Corollary}{Corollaries}
\crefname{defn}{Definition}{Definitions}
\crefname{example}{Example}{Examples}
\crefname{rem}{Remark}{Remarks}
\crefname{ass}{Assumption}{Assumptions}
\crefname{not}{Notation}{Notation}
\crefname{section}{Section}{Sections}

%Symbols

\newcommand{\R}{\mathbb{R}}

%Greek letters

%Mathcal Letters

\newcommand{\cW}{\mathcal{W}}

%Mathematical operators

\DeclareMathOperator{\Tr}{Tr}
\DeclareMathOperator{\Id}{Id}

\DeclareMathOperator{\osc}{osc}

\DeclareMathOperator{\grad}{grad}

%Environments

%Logical symbols

%Names

%Fonts

%Spacing

%\parindent 0 pt

\protected\def\ignorethis#1\endignorethis{}
\let\endignorethis\relax

\DeclareMathOperator{\met}{Met}

% Others: 
	% Sum

% Logic definitions:

% Mathematical niceities...
\newcommand{\mdot}{\cdotp}

\newcommand{\cbrac}[1]{\left(#1\right)}

\newcommand{\dbrac}[1]{\left\{#1\right\}}
\newcommand{\modulus}[1]{|#1|}

\newcommand{\set}[1]{\dbrac{#1}}

\newcommand{\dom}{ {\mathcal{D}}}
\newcommand{\ran}{ {\mathcal{R}}}

\newcommand{\comp}{\circ}

\newcommand{\Na}{\ensuremath{\mathbb{N}}}

\newcommand{\script}[1]{\mathscr{#1}}

			% Cut plane
			% Real part
			% Imaginary part

% Set Theory:
\renewcommand{\emptyset}{\varnothing}
\newcommand{\union}{\cup}
\newcommand{\Union}{\bigcup}
\newcommand{\intersect}{\cap}

\newcommand{\close}[1]{\overline{#1}}		% closure

%\newcommand{\img}[1]{{\rm Img}{\text{ }#1}}	% Image

	% Indicator funciton

			% Equivalence class
	% Quotienting

 	% Characteristic function 

\renewcommand{\epsilon}{\varepsilon}

\renewcommand{\phi}{\varphi}

%\newcommand{\comment}[1]{\hspace{1cm}\text{\it\small(#1)}}

% Topology:

\newcommand{\embed}{\hookrightarrow}		% Embeded in

% Algebra:

%\newcommand{\dual}[1]{#1^{\ast}}		% Algebraic Dual
			% Isomorphic

			% direct sum

		% --->
		% Commutator

% Specific Analysis things:
 	% Supremum
	% Infimum

\newcommand{\norm}[1]{\| #1 \|}			% Norm
			% Norm
\newcommand{\spt}[1]{{\rm spt} {\text{ }}#1}	% Support 
\DeclareMathOperator{\esssup}{esssup}
\DeclareMathOperator{\essinf}{essinf}
%\newcommand{\esssup}{{\rm ess {\text{ }}sup} {\text{ }} }	% Essential Supremum

		% Weak Convergence
\newcommand{\interior}[1]{\mathring{#1}}	% Interior

% Specific GMT/Geometry things:
\DeclareMathOperator{\tr}{tr}			% Trace
		% 2nd FF

		% Diam 

			% Length
			% Rad
			% Volume

\DeclareMathOperator{\divv}{div}		% Divergence

			% Kronecker delta

			% cut product

		% Partial

		% d/dt 
	% d/dx

		% Lie Derivative
			% Vector Fields X
	% Cristoffel Symbols

			% Ricci Curvature
			% Scalar Curvature

			% Boundary partial d
		% P-vectors
		% P-Forms\\
		% P-Forms\\
	% Tensors

		% Sections (tensor feilds)
	% 
	%
	% Varifold

	% Projection
		% Tangent Bundle
	% Cotangent bundle
		% Normal bundle
	% Conormal bundle

			% Push forward
\newcommand{\pullb}[1]{{#1}^\ast}			% Pull back

\DeclareFontFamily{OT1}{restrictfont}{}
\DeclareFontShape{OT1}{restrictfont}{m}{n}{<-> fmvr8x}{}

		% Restriction L

	% [[M]]
		% Slice <T,f,t>

\newcommand{\adj}[1]{{#1}^\ast}			% Adjoint star
		% Directional Deriv
			% Exterior Derivative

			% Dirac
		% differential d
		% jacobian
	% cojacobian
  % integral
		% Product of Currents

\newcommand{\inprod}[1]{\langle #1 \rangle}	% inner product braces
		% Connection
			% Almost everywhere
			% Lip
		% Lip f(x)
	% Volume
%\newcommand{\diam}[1]{{\rm diam}\cbrac{#1}}	% Diam
			% Hausdorff H
\newcommand{\Leb}[1][{}]{\script{L}^{#1}}			% Lebesgue L

		% Upper density
	% Lower density
		% Density 

% Functional Analysis/ Operator Theory

 	% Bounded Linear Functions over #1
		% Spectrum of #1
				% Joint spectrum
			% Resolvent operator
				% Resolvent Set
			% Polynomial algebra, power series algebra
		% Rational algebra with no poles in #1
		% Power series algebra with radius larger than #1
		% Strong convegence
	% Closed Linear operators over #1
				% Holomorphic functions
\newcommand{\conj}[1]{\overline{#1}}				% complex conjugate
				% Numerical Range
	% Spectral radius

\newcommand{\Lp}[2][{}]{{\rm L}^{#2}_{\rm #1}}		% L_{#1}^{#2}
\newcommand{\Ck}[2][{}]{{\rm C}^{#2}_{\rm #1}}		% C^k_c
		% C^k_c
\newcommand{\Sob}[2][{}]{{\rm W}^{#2}_{\rm #1}}		% W^{k,p}_{c} Sobolev space 
\newcommand{\SobH}[2][{}]{{\Sob[#1]{#2,2}}}	% H^k_{c}  Sobolev space 

% Harmonic ANalysis

			% Mean.
			% Maximal function
				% Cone over a point
					% Tent

% Script
\newcommand{\sE}{\script{E}}

% Heat kernel 
\newcommand{\hk}{\rho}

% SGN
\DeclareMathOperator{\sgn}{sgn}

\begin{document}

\title[Heat kernels and regularity for rough metrics]{Heat kernels and regularity for rough metrics on smooth manifolds}

\author{Lashi Bandara}
\author{Paul Bryan}

\address{Lashi Bandara, 
Institut für Mathematik,
Universität Potsdam, 
D-14476, Potsdam OT Golm, Germany
}
\urladdr{\href{http://www.math.uni-potsdam.de/~bandara}{http://www.math.uni-potsdam.de/~bandara}}
\email{\href{mailto:lashi.bandara@uni-potsdam.de}{lashi.bandara@uni-potsdam.de}}

\address{Paul Bryan, 
Department of Mathematics
Macquarie University 
NSW, 2109, Australia
}
\urladdr{\href{http://pabryan.github.io}{http://pabryan.github.io/}}
\email{\href{mailto:paul.bryan@mq.edu.au}{paul.bryan@mq.edu.au}}

\curraddr{}
\email{}
\date{\today}

\dedicatory{}
\subjclass[2010]{58J35, 35K10, 58B20}
\keywords{Rough metrics, parabolic Harnack estimate, heat kernel}

\maketitle

\begin{abstract}
We consider rough metrics on smooth manifolds and corresponding Laplacians induced by such metrics.
We demonstrate  that globally continuous heat kernels exist and are Hölder continuous locally in space and time. 
This is done via local parabolic Harnack estimates for weak
solutions of operators in divergence form with bounded measurable coefficients 
in weighted Sobolev spaces.
\end{abstract}

\setcounter{tocdepth}{1}
\tableofcontents

\parindent0cm
\setlength{\parskip}{\baselineskip}

\section{Introduction}
\label{sec:intro}
The existence and regularity of heat kernels on smooth manifolds with smooth metrics, compact or noncompact, 
is now a matter of classical fact. 
However, it is also useful and interesting to consider this problem on
metrics with non-smooth, and even discontinuous coefficients. 
Such metrics can arise naturally and the prototypical example 
is when the metric tensor is obtained as a pullback of a smooth metric under a 
Lipeomorphism. A metric of this form will in general possess measurable
coefficients.

Indeed, there has been some progress in this direction,
with two notable works being \cite{Norris, ERS}.
However, the focus of their work is somewhat different to 
what we present here, as are their methods.
We consider a wide and useful class of 
low regularity metrics called \emph{rough metrics}, 
which by definition, have only measurable coefficients, and are Riemannian-like in the sense
that they are locally comparable almost-everywhere to a smooth Riemannian metric (see \Cref{defn:rough_metric}).
These metrics became of interest in \cite{BMc, BRough} as they constitute geometric invariances of the Kato square
root problem. They are particularly significant in the noncompact setting.  
In the compact case, these metrics were used to study regularity properties
of a geometric flow, weakly tangential to the Ricci flow, 
in \cite{BLM, BCont}.

Unlike their classical, smooth counterparts, we are not able
to construct distances for rough metrics via the length functional as 
this is not a well defined device in this setting.
Nevertheless, rough metrics induce a Borel measure $\omega_g$ and we can define energies by the use 
of the exterior derivative. Consequently,  they give rise to natural Laplacians as self-adjoint operators.

The underlying manifolds we consider are smooth (topologically), 
the exterior derivative of \(C^{\infty}\) functions (and  differential forms)
 is defined, and is closable 
in $\Lp{2}(M)$, allowing us to construct Sobolev spaces $\SobH{1}(M)$
and $\SobH[0]{1}(M)$. The
former space is the energy space for the Neumann Laplacian
and the latter for the Dirichlet counterpart.
More generally, we can consider a closed subspace $\cW$ of $\SobH{1}(M)$
satisfying $\SobH[0]{1}(M) \subset \cW \subset \SobH{1}(M)$
as an energy space for the so-called \emph{mixed boundary conditions}. 
Corresponding to each such space, we obtain a Laplacian
as a self-adjoint operator and we can now ask
whether a heat kernel exists, and how regular one
can expect such an object to be. In the setting of rough metrics, our main theorem
is the following. 

\begin{thm}
\label{Thm:Main}
On a smooth manifold \(M\), equipped with a rough metric \(g\), 
and a subspace $\SobH[0]{1}(M) \subset \cW \subset \SobH{1}(M)$
with $\Ck{\infty} \intersect \cW$ dense in $\cW$, 
there exists a heat kernel \(\hk_t^{g,\cW}\) satisfying:  
\begin{enumerate}[(i)] 
\item \(\hk_t^{g,\cW} > 0\) for \(t > 0\),
\item on a parabolic cylinder \(Q = K \times [t_1, t_2]\) with \(0 < t_1 < t_2\), \(K \subseteq M\) is compact, there exists an \(\alpha = \alpha(Q)\) so that \(\hk_t^{g,\cW} \in C^{\alpha}(K \times K)\) for \(t_1 \leq t \leq t_2\).
\end{enumerate}
\end{thm}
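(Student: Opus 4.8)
The plan is to reduce the heat kernel problem for the rough metric $g$ to a classical parabolic PDE problem in divergence form with bounded measurable coefficients, but set in weighted Sobolev spaces adapted to the Borel measure $\omega_g$. First I would exploit the defining property of a rough metric: locally, and almost everywhere, $g$ is comparable to a smooth reference metric $\tilde g$, so on a coordinate patch $U$ the energy form $\int_M \langle df, df\rangle_g \, d\omega_g$ can be written as $\int_U a^{ij} \partial_i f \, \partial_j f \, \theta \, dx$ with $a^{ij}$ bounded, measurable, and locally uniformly elliptic, and $\theta$ a density bounded above and below on compact sets. This puts us in the framework of a symmetric Dirichlet form on $L^2(M,\omega_g)$ whose energy space is $\cW$; since $C^\infty \cap \cW$ is dense in $\cW$, this form is closed and regular, hence generates a self-adjoint Laplacian $\Delta_{g,\cW} \leq 0$ and a strongly continuous, positivity-preserving, sub-Markovian semigroup $e^{t\Delta_{g,\cW}}$.

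Next I would establish existence of a measurable heat kernel $\hk^{g,\cW}_t(x,y)$ as the integral kernel of $e^{t\Delta_{g,\cW}}$. The cleanest route is an on-diagonal $L^1$--$L^\infty$ ultracontractivity-type bound obtained locally: on a relatively compact open set the weighted Sobolev/Nash inequality (which follows from the smooth comparison and the standard Nash inequality for $\tilde g$) yields a local ultracontractivity estimate, and patching these with a finite-propagation or Davies-type argument, or simply appealing to the general Dirichlet form theory (e.g. the existence of a jointly measurable kernel for a semigroup mapping $L^2 \to L^\infty$ locally), produces $\hk^{g,\cW}_t \in L^\infty_{\loc}$. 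Strict positivity, item (i), then follows from irreducibility: the semigroup is positivity-preserving, and on a connected manifold the only invariant sets for the Dirichlet form are trivial, so $e^{t\Delta_{g,\cW}}$ is positivity-improving and $\hk^{g,\cW}_t > 0$ for $t>0$. Alternatively, positivity can be bootstrapped from the parabolic Harnack inequality once that is in hand, applied to the nonnegative weak solution $(x,s) \mapsto \hk^{g,\cW}_s(x,y)$.

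The heart of the matter, and the main obstacle, is item (ii): the local Hölder continuity of $(x,y)\mapsto \hk^{g,\cW}_t(x,y)$ on a parabolic cylinder $Q = K\times[t_1,t_2]$. For this I would invoke the local parabolic Harnack estimate promised in the abstract for weak solutions of divergence-form operators with bounded measurable coefficients in the weighted Sobolev spaces $\SobH{1}$. Fixing $y$, the function $u(x,s) = \hk^{g,\cW}_s(x,y)$ is a nonnegative weak solution of $\partial_s u = \Delta_{g,\cW} u$ on $M \times (0,\infty)$; the Moser iteration machinery — local boundedness of subsolutions, a weak Harnack inequality for supersolutions, and their combination via a John--Nirenberg / Bombieri--Giusti lemma, all carried out with respect to the measure $\omega_g$ using only the ellipticity bounds and the local weighted Sobolev and Poincaré inequalities — yields an oscillation-decay estimate and hence interior Hölder continuity in $(x,s)$ with an exponent $\alpha = \alpha(Q)$ depending only on the ellipticity constants and the doubling/Sobolev constants over the compact set $K$. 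Symmetry of the kernel, $\hk^{g,\cW}_t(x,y) = \hk^{g,\cW}_t(y,x)$, together with the semigroup identity $\hk^{g,\cW}_{t}(x,y) = \int_M \hk^{g,\cW}_{t/2}(x,z)\,\hk^{g,\cW}_{t/2}(z,y)\,d\omega_g(z)$, then upgrades one-variable Hölder continuity to joint Hölder continuity on $K\times K$ uniformly for $t\in[t_1,t_2]$. The technical crux throughout is that all of Moser's inequalities must be re-derived in the weighted setting: the measure $\omega_g$ is merely locally comparable to a smooth volume measure, so one works locally, and the chief care is in tracking that the constants in the Sobolev and Poincaré inequalities are controlled uniformly over $K$ by the comparison with the smooth metric $\tilde g$ — this is precisely where roughness, as opposed to smoothness, of $g$ forces the measurable-coefficient parabolic theory rather than classical Schauder estimates.
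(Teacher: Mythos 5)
Your overall strategy matches the paper's: reduce locally, via the almost-everywhere comparability of $g$ with a smooth metric, to a divergence-form parabolic equation with bounded measurable coefficients and a measurable density weight; obtain the kernel from a pointwise $\Lp{2}\to\Lp{\infty}_{\mathrm{loc}}$ bound on the semigroup (the paper does this by applying a weak Harnack estimate to the positive and negative parts of $e^{-t\Delta_{g,\cW}}f$ and then invoking the Riesz representation theorem, following Davies); get positivity from positivity preservation plus an irreducibility-type argument; and get H\"older continuity from a weighted parabolic Harnack inequality and the standard oscillation-decay iteration. Two of your steps, however, gloss over exactly the points where the paper has to work. First, the proposed ``patching with a finite-propagation or Davies-type argument'' is not available here: a rough metric induces no length structure or intrinsic distance (the paper stresses this), so off-diagonal Davies--Gaffney machinery has no distance to run on. Moreover, the local Nash inequality on a precompact chart only gives ultracontractivity of the \emph{localised Dirichlet} semigroup on that chart, and the domination inequality $|e^{-t\Delta_{g,\SobH[0]{1}(\Omega)}}f|\le e^{-t\Delta_{g,\cW}}|f|$ runs in the wrong direction to convert this into an upper bound for the global kernel. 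The correct repair is an interior a priori estimate for arbitrary local weak solutions of the global equation (Moser local boundedness, or the paper's Harnack estimate (H)), which you do have in your toolkit for part (ii); but as written the existence step has a gap. Second, your assertion that ``on a connected manifold the only invariant sets for the Dirichlet form are trivial'' is precisely the nontrivial content of strict positivity: the paper proves it by localising to precompact $\Omega$, showing $\SobH[0]{1}(\Omega)$ is an ideal in $\cW$ so that the Manavi--Vogt--Voigt domination theorem applies, and then verifying Ouhabaz's criterion through a connectedness argument combined with a Sobolev-embedding/continuity contradiction, plus a separate semigroup argument showing the kernel is not identically zero. Your alternative (bootstrapping positivity from Harnack chains) also needs that non-vanishing step.

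On the remaining points you diverge harmlessly or even usefully. For the Harnack inequality itself you propose re-deriving the full Moser machinery in the weighted setting, whereas the paper simply observes that, in a locally comparable chart, the equation is a degenerate parabolic equation with an $A_2$ weight and bounded measurable coefficients, and cites Saloff-Coste (noting the estimates localise) or Chiarenza--Serapioni; your route is more work but delivers the same estimate, and since the density is locally bounded above and below the $A_2$ framework is indeed the right one. Your final step, upgrading one-variable H\"older continuity to joint H\"older continuity on $K\times K$ via symmetry and the Chapman--Kolmogorov identity with locally uniform bounds on the kernel, is a point the paper leaves essentially implicit, so spelling it out is a genuine improvement, provided you verify that the H\"older and sup bounds are uniform in the frozen variable over $K$ (which follows from the finite cover by comparable charts).
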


Note that heat kernels have been considered
for some time in more general settings than smooth Riemannian manifolds,
for instance on metric spaces with bounds
on certain synthetic notions of curvature (c.f. \cite{Sturm}).
Many classical results can be recovered from this general 
theory for the special case of manifolds, 
since smooth (or continuous) metrics induce an associated intrinsic
distance structure. 

The treatment of heat kernels on smooth manifolds with smooth metrics 
usually proceed by constructing
a so-called \emph{minimal} heat kernel via local-to-global methods.
However, additional assumptions, typically on curvature,
are required in order to obtain its uniqueness  (see \cite{Chavel}).
More seriously, the following Varadhan's formula
$$ d^2(x,y) = \lim_{t \to 0} 4 t \log \hk_t(x,y)$$
may fail for this minimal heat kernel. 
In this classical setting, (or even for certain classes of 
metric spaces), it is well known that Varadhan's formula holds for the heat 
kernels corresponding to the Dirichlet and Neumann Laplacians 
(see  \cite{Norris, ERS}).

As aforementioned, the results of \cite{Norris, ERS} 
are similar to ours although their methods are different. 
Our approach is somewhat simpler with the key 
idea being to construct a global heat kernel
via the Riesz representation theorem, a perspective made known to us from 
\cite{Davies} in the smooth setting. However, given the generality of our setting,
 we require a certain 
weak Harnack type estimate, along with 
some operator theoretic facts, for this technique to succeed.

This approach to constructing a heat kernel was used previously in  
\cite{BCont} where the manifold was assumed to be compact,  and 
the existence and regularity of the heat kernel was
reduced to \emph{parabolic Harnack estimates} for divergence form operators
with bounded measurable coefficients against a smooth background. 
The required estimates
were obtained by observing that  
the rough metric is globally comparable to a smooth one,
due to the compactness of the underlying manifold, and
through the  results  in \cite{SC}.
In our situation,  we cannot argue in this way without imposing
severe restrictions. Thus, we demonstrate
how to obtain the heat kernel via \emph{local Harnack estimates} 
before proceeding to show that such estimates 
hold in our setting.

\section*{Acknowledgements}

The first author was supported by the Knut and Alice Wallenberg foundation, KAW 2013.0322 
postdoctoral program in Mathematics for researchers from outside Sweden, and from  SPP2026 
from the German Research Foundation (DFG).
The second author was 
supported by the  EPSRC on a Programme Grant entitled ``Singularities of Geometric Partial Differential Equations'' 
reference number EP/K00865X/1 as well as the University of Queensland and Joe Grotowski.
Both authors would like to thank Warwick University where this project began as
well as Kaj Nyström for his useful comments.
Moreover, they would like to acknowledge the gracious support of the first author's 
parents, Jayanthi and Mano Bandara, for providing a week of curry and hospitality during some of the crucial 
phases of this paper, along with the Orbost Hotel in Victoria for their beer and ambience. 

\section{Rough Metrics}
\label{sec:rough_metrics}

As far as the authors are aware, the term ``rough metric'' as used in the current context 
was coined in \cite{BRough} as they were 
recognised to be the geometric invariances of the Kato square root problem.
We emphasise here that similar notions existed implicitly in the 
literature  in \cite{Norris,SC}.
We recall the most important aspects of rough metrics here. 
A more detailed exposition can be found in Section 3 in \cite{BRough}.

We begin with the following definition, which recognises
that a manifold affords us not only with a metric independent 
topology and differentiable structure, but also a measure
structure that is independent of any metric.

\begin{defn}
\label{defn:borel_lebesgue}
Let \(M^n\) be a smooth, \(n\)-dimensional manifold. A set \(B \subseteq M\) is a \emph{Borel set} if for all charts \(\varphi : U \subseteq M \to V \subseteq \R^n\), the set \(\varphi(B \cap U) \subseteq \R^n\) is a Borel set. A set \(E \subseteq M\) is said to be \emph{Lebesgue measurable} if \(\varphi(E \cap U)\) is a Lebesgue measurable set of \(\R^n\).
\end{defn}

\begin{rem}
The Lebesgue measurable sets form a \(\sigma\)-algebra containing the Borel \(\sigma\)-algebra. The notion of Lebesgue measurable set does not involve a measure on \(M\). The collection of Lebesgue measurable sets is canonical in the following sense: given any smooth (or continuous) metric $h$, this collection of subsets are exactly the collection of $\omega_h$-measurable sets.
\end{rem}

We now define the notion of measurable functions and sections of smooth vector bundles.

\begin{defn}[Measurable functions and sections]
A function $f: M \to \R$ is measurable if $f^{-1}(-\infty, \alpha]$ is measurable for every $\alpha  \in \R$. A measurable section \(s\) of a vector bundle \(\pi : V \to M\) is a function \(s: M \to V\) with \(\pi \circ s = \Id_M\) and such that for any Lebesgue measurable subset \(E \subseteq V\), the set \(s^{-1}(E)\) is a Lebesgue measurable subset of \(M\).  We denote the set of such sections by $\Gamma(V)$.
\end{defn}

Note that \(V\) is a smooth manifold itself and so Lebesgue Measurability of subsets of \(V\) is defined as in \ref{defn:borel_lebesgue}. In particular, we have the bundle $\pi_{p,q} : T^{p,q}M \to M$ of $(p,q)$ tensors of covariant rank $p$ and contravariant rank $q$ as well as measurable tensor fields \(\Gamma(T^{p,q}M)\). We also have the bundle of differential forms \(\pi_k : \Lambda^k T^{\ast} M \to M\) with measurable sections \(\Omega^k(M) = \Gamma(\Lambda^k T^{\ast} M)\). The exterior derivative is however, \emph{not defined} for all measurable sections. All the usual constructions of smooth sub-vector and sub-fibre bundles apply and in particular, we have a well defined notion of measurable sections of the fibre bundle, \((T^{\ast} M \odot T^{\ast} M)_+\) of positive definite, symmetric bilinear forms important in the main definition of this section.

\begin{defn}[Rough metric]
\label{defn:rough_metric}
A \emph{rough} metric \(g\) is a Lebesgue measurable section of \(\met(M) = (T^{\ast} M \odot T^{\ast} M)_+\), the bundle of positive definite, symmetric bilinear forms on \(TM\) that are in addition, \emph{locally comparable} to Euclidean metrics:
for each $x \in M$, there is a chart $\psi_x:V_x \to \R^n$ and a constant $C_x = C_x(V_x)$ such that
\[
\frac{1}{C_{x}} \pullb{\psi_x}\delta_{\R^n}(y) (X, Y) \leq g(y)(X, Y) \leq C_{x} \pullb{\psi_x}\delta_{\R^n}(y) (X, Y)
\]
for all tangent vectors \(X, Y \in T_yM\) for almost all $y \in V_x$. 
\end{defn}

\begin{rem}
Equivalently, there is an open cover \(\lbrace U_{\alpha} \rbrace\) of \(M\), smooth metrics \(g_{\alpha} \in \met(U_{\alpha})\) and real constants \(C_{\alpha} > 0\) such that
\[
\frac{1}{C_{\alpha}} g_{\alpha} (X, Y) \leq g(X, Y) \leq C_{\alpha} g_{\alpha} (X, Y)
\]
for all tangent vectors \(X, Y \in T_yM\) for almost every $y \in U_\alpha$.
\end{rem}

\begin{rem}
By employing a partition of unity argument, we can patch together the metrics \(g_{\alpha}\) to produce a globally defined, smooth metric \(h\). If the constants \(C_{\alpha}\) are uniformly bounded above and away from zero, then our rough metric will be globally comparable to \(h\). This is automatic whenever \(M\) is compact. If \(M\) is not compact, then this need not be true.
However, if the rough metric $g$ is at least continuous, we can find a smooth globally comparable metric for any $C > 1$. 
\end{rem}

Given a rough metric, we may locally define a \(g_{\alpha}\)-self-adjoint bounded measurable section \(A_{\alpha} : U_{\alpha} \to T^{1,1}U_{\alpha}\) by
\[
g(X, Y) = g_{\alpha} (A_{\alpha} (X), Y).
\]
Such a definition has the advantage that it allows us to work with respect to the \emph{smooth} metric \(g_{\alpha}\), 
at least locally.

A rough metric gives rise to an $\Lp{p}$-theory over the $T^{p,q}M$ tensor bundle by defining \(L^p\) norms,
\[
\|\xi\|_p := \left(\int_{M} |\xi|_{g}^p\ d\omega_g\right)^{1/p}
\]
for $p \in [1, \infty)$ and
\[
\|\xi\|_{\infty} := \inf\set{C: |\xi|_g \leq C\ \text{a.e.}}
\]
where the rough metric \(g\) is extended to tensor bundles exactly as in the smooth case. For example, \(g(X \otimes Y, Z \otimes W) = g(X, Y) g(Z, W)\) gives a well defined, measurable section of positive definite, symmetric bilinear forms on \(TM \otimes TM\).

Lastly, let us note that all function spaces we consider are complex-valued function spaces, which are obtained from the real setting by complexification.

\section{Laplacians and Heat Equation for rough metrics}

For a smooth function \(f \in \Ck{\infty}(M)\), write \(\nabla f = df\) for the differential of \(f\). 
The differential \(\nabla : \Ck{\infty}(M) \to \Ck{\infty}(T^{\ast} M)\), maps smooth functions 
\(\Ck{\infty}(M)\) to smooth one-forms \(\Ck{\infty}(T^{\ast} M)\). 
We emphasise that this object is only dependent on the differentiable structure of $M$, and is independent of any choice of metric.

Recall that in the classical setting of a smooth $g$,  the Laplacian acting on functions is given  via the expression $\Delta_g = - \tr_g \nabla^2$, where $\nabla^2 = \nabla^{T^\ast M} \comp \nabla$
with $\nabla^{T^\ast M}$ the Levi-Civita connection of $g$ on \(T^{\ast} M\) and the trace is taken on the \((1,1)\) hessian obtained by metric contraction of the \((2, 0)\) hessian \(\nabla^2\). Equivalently, we obtain that  that \(\Delta_g = \grad_g^\ast \grad_g\) where \(\grad_g\) is the metric contraction of \(\nabla\).

For rough metrics, or in fact, any metric below $\Ck{0,1}$, we have no notion of metric-compatibility (being unable to differentiate such a metric), and hence the Levi-Civita connection is not generally defined. Therefore, we understand this operator in an appropriate weak sense as follows. 

First, we observe that for a rough metric $g$, the operator
$\nabla_p := d_p: \Ck{\infty} \cap \Lp{p}(M) \to \Ck{\infty} \cap \Lp{p}(T^\ast M)$
is closable, as well as $\nabla_c := d_c$ with $\dom(d_c) = \Ck[c]{\infty}(M)$,
for $p \in [1, \infty)$. A proof of this statement is given in Proposition 3.10 in \cite{BRough}, 
which reduces to covering the manifold via precompact locally comparable
charts and noting that $d$ commutes with pullbacks inside each such chart.
Consequently, we can define first-order Sobolev spaces
\[
\Sob[0]{1,p}(M) := \dom(\overline{\nabla_c})\quad \text{and}\quad  \Sob{1,p}(M) := \dom(\overline{\nabla_p}),
\]
where the closures are taken with respect to the \(\Lp{p}(M)\) norm. We may then define Sobolev norms
\[
\|u\|_{\Sob{p}} = \|u\|_p + \|\nabla u\|_p
\]
which are finite on the respective function spaces $\Sob[0]{1,p}(M) \subset \Sob{1,p}(M) \subset \Lp{p}(M)$. We remark that without closability, if these spaces are obtained via a completion with respect to the Sobolev norm, it is unclear that they are, in fact, spaces of functions. In this paper, we exclusively deal with \(p = 2\) and another consequence of the closability, coupled with the 
fact that $\Ck[c]{\infty}(M)$ is dense in $\Lp{2}(M)$,
is that $\nabla_2$ and $\nabla_c$ are densely-defined
operators. Therefore, operator theory yields 
that $\adj{\nabla_2}$ and $\adj{\nabla_c}$ exist
as densely-defined, closed operators.

\subsection{The Laplacian in the smooth setting}

In the case of a smooth $g$ that is also complete, we
always have that $\SobH[0]{1}(M) = \SobH{1}(M)$. 
Since $\nabla_c = \nabla_2$, we obtain 
a unique Laplacian $\Delta_g = \adj{\nabla_c} \close{\nabla_c} = \adj{\nabla_2}\close{\nabla_2}$.
We will emphasise at this moment that even in the smooth case, this does not mean that the Laplacian is essentially self-adjoint on $\Ck[c]{\infty}(M)$; in fact, the only general statement that can be made is that
$\SobH{2}(M) \subset \dom(\Delta_g)$. The 
case of essential self-adjointness can
be obtained under a uniform lower bound on Ricci
curvature (see \cite{BDensity}) but it
is not known to the authors whether this result is sharp.

It is also useful to obtain $\Delta_g$ via the energy,
$$\sE_g(u,v) = \inprod{\nabla u, \nabla v} = \int_M\ g(\nabla u, \nabla v)\ d\omega_g,$$
where $\dom(\sE_g) = \SobH[0]{1}(M) = \SobH{1}(M)$.
The operator $\Delta_g$ is now obtained via the so-called first and second representation 
theorems, Theorems 2.1 and 2.23 in Chapter IV  in \cite{Kato}.
 
Note that in our setup, $\nabla$ is independent of the geometry (i.e. the metric), but the energy is not. At the operator level, it is in taking the adjoint $\adj{\nabla}_g$ where the geometry becomes of consequence. Alternatively, defining the gradient, $\grad_g u =  (\nabla u)^\sharp = g(\nabla u, \cdot)$, we have by definition, $\sE_g(u,v) = \inprod{\grad_g u, \grad_g v}$. In this case, $\Delta_g = \adj{\grad}_g \close{\grad}_g$, and so we could equivalently define \(\Delta_g\) via \(\grad_g\). We prefer the former picture for the simple fact that in the latter picture, the metric information and topological information are intertwined whereas in our case we have two operators in which one which only depends on the differential structure (the exterior derivative $\nabla$) and the other on the geometry $g$ (the adjoint of the exterior derivative $\nabla^\ast_g$). In other words, there is a canonical
operator, $\nabla$ arising from the smooth structure and \emph{each geometry (i.e. metric) determines an adjoint $\nabla^{\ast}_g$}.

When $g$ fails to be complete, it may be that  
$\SobH[0]{1}(M) \subsetneqq  \SobH{1}(M)$.
In that case, we obtain a \emph{Dirichlet Laplacian} and  a \emph{Neumann 
Laplacian} corresponding to which space we pick 
to consider the associated energy.
We retain this language from the world of boundary 
value problems because there, when $M = \interior\Omega$
for a bounded domain $\Omega$ (say with Lipschitz boundary), we have that
$\SobH[0]{1}(M)$ defines the energy for
the Laplacian considered in the Dirichlet problem
and $\SobH{1}(M)$ defines the energy 
for the Laplacian considered for the Neumann problem.

\subsection{The Laplacian in the rough setting}
Inspired by the classical setting, we define
the following.
 
\begin{defn}[$(g,\cW)$-Laplacian]
Let \(g\) be a rough metric on a smooth manifold \(M\)
and $\cW \subset \SobH{1}(M)$ be a closed subspace
of $\SobH{1}(M)$ such that $\Ck[c]{\infty}(M) \subset \cW$
and $\Ck{\infty} \cap \cW$ is dense in $\cW$ with respect to the norm $\norm{\cdot}_{\cW} = \norm{\cdot}_{\SobH{1}}$. The \emph{$(g,\cW)$-Laplacian}
\(\Delta_{g,\cW} : \Lp{2}(M) \to \Lp{2}(M)\) is defined by
\[
\Delta_{g,\cW} u := \adj{\nabla_{\cW}} \close{ \nabla_{\cW}} u,
\]
where  $\nabla_{\cW} = \nabla$ with $\dom(\nabla_{\cW}) = \cW$. 
The domain of the operator $\Delta_{g,\cW}$ is then given by
$\dom(\Delta_{g,\cW}) = \set{u \in \cW: \modulus{\inprod{\nabla u, \nabla v}} \lesssim C_u \norm{v}\ \forall v \in \cW}.$
\end{defn}

By construction, the  operators $\Delta_{g,\cW}$ 
are densely-defined, closed, self-adjoint and satisfy 
$\dom(\sqrt{\Delta_{g,\cW}}) = \cW$.
It is difficult to see how to obtain this from the expression for $\Delta_{g,\cW}$.
Rather, this operator is constructed via the first and second representation 
theorems, Theorems 2.1 and 2.23 in Chapter IV in \cite{Kato}, on considering the   
energy $\sE_{g,\cW}(u,v) = \inprod{\nabla u, \nabla v}$ 
with $\dom(\sE_{g,\cW}) = \cW$ written exactly as we did for the smooth case.
These are routine facts from operator theory that are valid in far greater generality 
than what we consider here and these methods are exposited to 
greater depths in \cite{Yosida, Kato}.
 
Let us remark on why allow for the spaces \(\cW\). First, observe that the case of $\cW = \SobH[0]{1}(M)$
yields the Dirichlet Laplacian
and $\cW = \SobH{1}(M)$ yields the Neumann counterpart.
However, beyond these two obvious choices, 
there are many interesting spaces $\cW$ that
can be considered. These are best
seen emerging from boundary value problems.
As a guiding example, let $M = \Omega \subset \R^n$
be a smooth (or Lipschitz) bounded domain, and let $\Sigma \subset \partial \Omega$
be an open subset of the boundary $\partial \Omega$.
On letting $\Tr: \SobH{1}(\Omega) \to \SobH{\frac{1}{2}}(\partial \Omega)$
be the trace map to the boundary, define 
$\cW_{\Sigma} = \set{u \in \SobH{1}(\Omega): \spt (\Tr u) \subset \close{\Sigma}}$. 
It is clear that $\Ck[c]{\infty}(\Omega) \subset \cW_{\Sigma}$ and
it can be shown that $\cW_{\Sigma} \subset \SobH{1}(\Omega)$
is a closed subset. 
The interpretation here is that $\Sigma$ specifies Neumann boundary conditions whereas $\Omega \setminus \Sigma$
specifies Dirichlet boundary conditions. 
These are the so-called mixed boundary value problems 
introduced to us through \cite{AKM2}.

\subsection{The Heat Equation}

Now, we proceed to define what we mean by a solution to the heat
equation, which becomes the central theme of what is to follow.

\begin{defn}[Solution to the heat equation]
A function \(u \in \Ck{1}((0,\infty),  \dom(\Delta_{g,\cW}))\) solves the \emph{\(\cW\)-heat equation} (or just heat equation for short) with initial condition $u_0 \in \Lp{2}(M)$ if we have that
\begin{enumerate}[(i)]
\item  $\partial_t u (\cdot, t) = \Delta_{g,\cW} u(\cdot, t)$ for $t \in (0, \infty)$ and 
\item $\lim_{t \to 0} u(\cdot, t) = u_0$ in $\Lp{2}(M)$.
\end{enumerate}
\end{defn}

For any solution \(u\), of the heat equation, we have the representation formula
\[
u(x,t) = (e^{-t \Delta_{g,\cW}}u_0)(x)
\]
for almost every $x \in M$.
 
\begin{rem}
In the compact case with a smooth metric, ``a solution'' is often formulated to mean $u \in \Ck{\infty}( M \times (0, \infty))$
with $\partial_t u(x,t) = -\tr \nabla^2 u(x,t)$ with 
$\lim_{t \to 0} u(x,t) = u_0 \in \Ck{\infty}(M)$.
Since $\Ck{\infty}(M) \subset \Lp{2}(M)$ this notion of solution is stronger than our notion of solution. Here, a solution is a $\Ck{1}$ map from $(0, \infty)$ to $\dom(\Delta_{g,\cW}) \subset \cW \subset \SobH{1}(M)$ with the initial condition attained in the $\Lp{2}(M)$ topology.
\end{rem}

\section{Existence and positivity of the heat kernel}

\begin{defn}[Heat kernel]
A separably measurable map $(t,x,y) \mapsto \hk^{g,\cW}_{t}(x,y): \R_+ \times M \times M \to \R$, 
almost-everywhere symmetric in $(x,y)$, is a heat kernel 
if for every $u \in \Ck{1}(\R_+, \cW)$, a solution to the heat equation 
$\partial_t u = \Delta_{g,\cW}u$ with initial data $u_0 \in \Lp{2}(M)$, we have
\[
u(t,x) = \int_{M} \hk^{g,\cW}_t(x,y) u_0(y)\ d\omega_g(y)
\]
and
$\lim_{t \to 0} \hk_t^{g,\cW}(\mdot,y) \to \delta_y$ in the sense of distributions where $\delta_y$ is the Dirac-delta distribution at $y$.
\end{defn}

In the case of a smooth metric $g$, a typical construction for
the heat kernel is to construct the so-called \emph{minimal} heat kernel.
This is done by taking smooth domains $\Omega_j$, 
each of which are precompact and satisfying $\close{\Omega_j} \subset \Omega_{j+1}$. 
Inside each domain, one can solve the Dirichlet problem
to obtain heat kernels $\hk^{g,j}_{t}$, each of which
satisfies 
$$ \int_{M} \hk^{g,j}(x,y)\ d\omega_g(y) < 1.$$
Then, one can make sense of the limit $\lim_{j \to \infty} \hk^{g,j}(x,y)$
in the compact-open topology to obtain a heat kernel $\hk^{\min}_t$.
See Chapter VIII in \cite{Chavel} for the details
of this construction.

We refrain from considering this approach in the rough setting 
for the reason that this object may fail to be unique (uniqueness
is known for smooth complete $g$ with 
uniform lower bounds on Ricci curvature) and more seriously, the following
Varadhan's asymptotics may fail: 
$d^2(x,y) = \lim_{t \to 0} 4t \log \hk^{\min}_t(x,y)$.

Consequently, we can instead consider heat kernels
associated to the operator $\Delta_{g,\cW}$ in $\Lp{2}(M)$
via the Riesz representation theorem as described in Theorem 
5.2.1 in \cite{Davies}. As aforementioned, in the smooth case,
at least for the heat kernel corresponding to the 
Dirichlet and Neumann Laplacian, we obtain the desired  Varadhan's
asymptotics. See the discussion on page 107 in \cite{ERS} for details.

There is a large class of rough metrics
for which we know that the heat kernel exists
and for which Varadhan's formula holds. 
Take $h$ smooth and let $\psi:M \to M$
be a Lipeomorphism (that is, a locally bi-Lipschitz map).
Define $g = \pullb{\psi}h$, then, $g$ 
also induces a length structure with 
distance given by $d_g(x,y) = d_h(\psi(x), \psi(y))$.
Moreover, a calculation gives
that $\hk^{g}_t(x,y) = \hk^{h}_t(\psi(x), \psi(y))$
(or more generally for $\hk^{g,\cW}_t(x,y)$). 
On combining the fact that $\hk^h_t$ satisfies
Varadhan's formula, it is easy to see
so does $\hk^{g}_t$. In fact, we 
note that $\hk^g_t \in \Ck{0,1}(M \times M)$,
which is of higher regularity than we
obtain for general rough metrics.
More generally, \cite{Norris} constructs
distances on Lipschitz manifolds with our
notion of rough metrics for which this 
formula holds, and \cite{ERS} examines
this problem in even greater generality.

In what follows, we adapt the key idea in 
the proof of Theorem 5.2.1 in \cite{Davies}
to our setting. Due to the more general 
nature of our problem, we are forced to 
establish a number of a priori facts which 
are more or less immediate in the smooth case.
What we present here was initially adapted from \cite{SC} 
for the compact case in \cite{BCont}, but 
the analysis we present here shows it can be made to work more generally. 

The fundamental estimate we
require is the following \emph{weak Harnack-type inequality} for positive solutions of the heat
equation.
\begin{itemize}
\item[] At each $x \in M$ and $t > 0$, there exists a precompact
open set $U_x$, a $\delta_t \geq 0$ and a constant $C(t, U_x)  > 0$ such that
\begin{equation}
\tag{H}
\label{eq:Harnack}
\esssup_{y \in U_x} u(y,t)
	\leq C(t, U_x)  \essinf_{y \in U_x} u(y, t + \delta_t)
\end{equation}
\end{itemize}

In the compact case, even for a
rough metric, such an estimate can be
obtained with $C(t,U_x)$ precisely quantified
in terms of the curvature of a nearby smooth metric.
However, the key observation to pass from the compact analysis
to the general setting we present here was to note that
the estimates we require are purely local. In 
what is to follow, we will see that the constant can be
crude, it simply allows us to assert the existence and regularity
of $\hk^{g,\cW}_t$, but the finer properties can still 
be extracted by operator theory, in particular, from the 
fact that $t \mapsto e^{-t\Delta_{g,\cW}}$ is a semigroup.

\begin{prop}
\label{prop:MainRed}
Suppose that \eqref{eq:Harnack} holds. Then, the heat
kernel $\hk^{g,\cW}_{\mdot}: \R_{+} \times M \times M \to \R$
exists. Moreover, for every $t > 0$ and almost-every $y \in M$,  $x\mapsto \hk^{g,\cW}_t(x,y) > 0$ for almost-every $x \in M$.
\end{prop}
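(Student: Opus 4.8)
The plan is to follow the strategy of Theorem 5.2.1 in \cite{Davies}, exploiting the self-adjoint semigroup structure of $e^{-t\Delta_{g,\cW}}$ together with \eqref{eq:Harnack} to upgrade from an abstract $\Lp{2}$-bounded operator to an integral operator with a genuine kernel. First I would recall that, by the first and second representation theorems, $\Delta_{g,\cW}$ is a non-negative self-adjoint operator, so the spectral theorem gives a strongly continuous contraction semigroup $T_t = e^{-t\Delta_{g,\cW}}$ on $\Lp{2}(M)$, with $T_t$ bounded and self-adjoint, and $T_{s+t} = T_sT_t$. For any $u_0 \in \Lp{2}(M)$, the function $u(t) = T_tu_0$ is the (unique) solution to the heat equation in the sense defined above. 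The key point to extract from \eqref{eq:Harnack} is a \emph{local bound of $\Lp{\infty}$ by $\Lp{2}$} for $T_t$: at a fixed time $t > 0$ and a point $x$, using positivity of the semigroup on non-negative data together with the Harnack inequality on $U_x$, one shows that $\esssup_{U_x}|T_t u_0| \leq C(t,U_x)\,\essinf_{y \in U_x}(T_{t+\delta_t}|u_0|)(y) \leq C'(t,U_x)\|u_0\|_{\Lp{2}}$, where the last step uses that $T_{t+\delta_t}$ is $\Lp{2}\to\Lp{2}$ bounded and that the essential infimum over a set of positive measure is controlled by the $\Lp{2}$ norm over $M$ (here one may need $\omega_g(U_x) > 0$, which holds since $g$ is locally comparable to a smooth metric). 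To handle sign-changing $u_0$ one decomposes $u_0 = u_0^+ - u_0^-$ and uses positivity preservation, which follows from the Beurling–Deny criteria applied to the Dirichlet form $\sE_{g,\cW}$ (the form is Markovian because $\nabla$ commutes with composition by the normal contraction, and $\cW$ is stable under such truncations).

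Next, writing $T_t = T_{t/2}\circ T_{t/2}$ and using self-adjointness, for fixed $y$ the functional $u_0 \mapsto (T_{t/2}u_0)(y)$ — which is well-defined for a.e. $y$ once we have the pointwise bound above — is bounded linear on $\Lp{2}(M)$, so the Riesz representation theorem produces $k_{t/2,y} \in \Lp{2}(M)$ with $(T_{t/2}u_0)(y) = \inprod{u_0}{k_{t/2,y}}$ for a.e. $y$. Then $(T_t u_0)(x) = (T_{t/2}(T_{t/2}u_0))(x) = \inprod{T_{t/2}u_0}{k_{t/2,x}} = \inprod{u_0}{T_{t/2}k_{t/2,x}}$, and one sets $\hk^{g,\cW}_t(x,y) := (T_{t/2}k_{t/2,x})(y)$. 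Symmetry in $(x,y)$ for a.e. pair follows from self-adjointness: $\hk^{g,\cW}_t(x,y) = \inprod{k_{t/2,x}}{k_{t/2,y}}$, manifestly symmetric. Joint (separable) measurability in $(t,x,y)$ follows from strong continuity of $t \mapsto T_t$, separability of $\Lp{2}(M)$ (which in turn comes from second-countability of $M$), and Fubini-type arguments, realising $\hk$ as a pointwise a.e. limit of measurable functions; the initial condition $\hk^{g,\cW}_t(\cdot,y) \to \delta_y$ in distributions is read off from $T_t u_0 \to u_0$ in $\Lp{2}(M)$ tested against $\Ck[c]{\infty}(M)$.

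For positivity, $\hk^{g,\cW}_t(x,y) \geq 0$ for a.e. $(x,y)$ is immediate from positivity preservation of the semigroup (if $u_0 \geq 0$ then $\int \hk_t(x,y)u_0(y)\,d\omega_g(y) = (T_t u_0)(x) \geq 0$ for a.e. $x$, and varying $u_0$ over a countable dense family of non-negative functions gives $\hk_t \geq 0$ a.e.). To get \emph{strict} positivity, I would fix $y$, consider $v(x,t) := \hk^{g,\cW}_t(x,y)$, which solves the heat equation with $v(\cdot,t) \geq 0$, and argue by contradiction: the set where $v(\cdot,t) = 0$ is, by \eqref{eq:Harnack}, propagated — if $v(\cdot,t_0)$ vanishes on a set of positive measure inside some $U_x$, then $\essinf_{U_x} v(\cdot, t_0+\delta_{t_0}) $ is controlled from below but the Harnack inequality run in reverse (swapping roles, or iterating on a chain of overlapping $U_x$'s covering $M$, using connectedness of $M$) forces $v(\cdot,t_1) \equiv 0$ for all later $t_1$, contradicting $\lim_{t\to 0}\int v(\cdot,t)\,d\omega_g \cdot \phi = \phi(y) \neq 0$ via the semigroup identity $v(\cdot,t) = T_{t-t_1}v(\cdot,t_1)$ and conservativeness-type reasoning. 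The main obstacle I anticipate is precisely this last step — turning the one-sided, possibly-nonstrict Harnack inequality \eqref{eq:Harnack} (with $\delta_t$ possibly zero and $C$ uncontrolled) into a genuine strong-minimum / unique-continuation statement robust enough to chain across a non-compact $M$; this likely requires a careful covering argument together with the observation that $\delta_t$ can be taken arbitrarily small and the semigroup law lets one interpolate times freely.
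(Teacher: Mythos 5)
The first half of your proposal (existence of the kernel) is essentially the paper's argument: Beurling--Deny positivity preservation, the decomposition $u_0 = u_0^+ - u_0^-$, the Harnack bound \eqref{eq:Harnack} integrated over $U_x$ and combined with Cauchy--Schwarz and $\Lp{2}$-contractivity to get $|(e^{-t\Delta_{g,\cW}}u_0)(x)| \lesssim_{t,U_x} \norm{u_0}$, then Riesz representation and the splitting $e^{-t\Delta_{g,\cW}} = e^{-\frac{t}{2}\Delta_{g,\cW}}e^{-\frac{t}{2}\Delta_{g,\cW}}$ to define $\hk^{g,\cW}_t(x,y)$ as an inner product of the representing functions. (One small caution: for positivity preservation you only need, and can only easily verify, that $\cW$ is stable under $u \mapsto |u|$ with $\norm{\nabla |u|} \leq \norm{\nabla u}$ --- which uses the density hypothesis on $\Ck{\infty}\cap\cW$ --- not full Markovianity under all normal contractions.)

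The genuine gap is in strict positivity, exactly where you flag it. The estimate \eqref{eq:Harnack} cannot be ``run in reverse'' or chained the way you suggest: it bounds the supremum at time $t$ by the infimum at the \emph{later} time $t+\delta_t$, so vanishing of $\hk^{g,\cW}_{t+\delta_t}(\cdot,y)$ on a positive-measure subset of $U_x$ propagates \emph{backwards} in time to all of $U_x$, not forwards; the shifts $\delta_t \geq 0$ are given (not at your disposal to shrink), the constants $C(t,U_x)$ and the sets $U_x$ are completely uncontrolled, and on a noncompact $M$ a chaining argument would need infinitely many steps with accumulating time shifts. The paper does not use \eqref{eq:Harnack} at all for this step. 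Instead it localises: for a precompact, connected $\Omega$ with piecewise smooth boundary, $\SobH[0]{1}(\Omega)$ embeds as an \emph{ideal} in $\cW$, so the domination theorem of Manavi--Vogt--Voigt \cite{MVV} gives $|e^{-t\Delta_{g,\SobH[0]{1}(\Omega)}}f| \leq e^{-t\Delta_{g,\cW}}|f|$; the localised Dirichlet semigroup is then shown to be \emph{irreducible} (hence strictly positivity improving) via Ouhabaz's criterion \cite{El-Maati}, by a connectedness plus Sobolev-embedding/continuity argument that rules out nontrivial invariant sets $\Omega'$; separately, $\hk^{g,\cW}_{t/2}(\cdot,y) \not\equiv 0$ for a.e.\ $y$ is proved by a spectral/semigroup injectivity argument ($e^{-s\Delta_{g,\cW}}\chi_P = 0$ forces $\chi_P = 0$); and finally $\hk^{g,\cW}_t(x,y) = \bigl(e^{-\frac{t}{2}\Delta_{g,\cW}}\hk^{g,\cW}_{t/2}(\cdot,y)\bigr)(x) \geq \bigl(e^{-\frac{t}{2}\Delta_{g,\SobH[0]{1}(\Omega)}}\chi_\Omega\hk^{g,\cW}_{t/2}(\cdot,y)\bigr)(x) > 0$ a.e.\ on $\Omega$, with an exhaustion of $M$ by such $\Omega$. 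You would need to import this (or an equivalent irreducibility/domination mechanism) to close your argument; the weak Harnack hypothesis alone does not deliver it.
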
 
\begin{proof}
We outline the steps of the construction of the heat kernel
noting that the pointwise expressions from here on should be
understood in an almost-everywhere sense.
\begin{enumerate}[(i)]
\item For $u \in \Ck{\infty}(M)$, it is readily verified that 
	$\modulus{ \nabla \modulus{u}} \leq \modulus{ \nabla u}.$
	In particular, this means that 
	for $u \in \Ck{\infty}(M) \intersect \cW$,
	the inequality
	$\norm{\nabla \modulus{u}} \leq \norm{\nabla u}$ holds
	and therefore, whenever $u \in \cW$
	we obtain $\modulus{u} \in \cW$ 
	with this estimate for such a $u$.

\item 	By construction $\dom(\sqrt{\Delta_{g, \cW}}) = \cW$
	and $\norm{\sqrt{\Delta_{g,\cW}} f} = \norm{\nabla f}$
	for all $f \in \cW$ and therefore, 
	$\norm{\sqrt{\Delta_{g, \cW}} \modulus{u}} 
		\leq \norm{\sqrt{\Delta_{g, \cW}} u}$. 
	By the Beurling-Deny condition (c.f.  Corollary 2.18(2) in  \cite{El-Maati}), 
	this yields that for $\Lp{2}(M) \ni f \geq 0$
	we have that  $e^{-t \Delta_{g,\cW}} f \geq 0$. That is, the semigroup 
	$e^{-t \Delta_{g,\cW}}$ is (weakly) positive preserving.

\item Now, let $f \in \Lp{2}(M)$, and write
	$f = f_+ - f_-$, where $f_{\pm} = \max\set{0,\pm f}$.
	It is clear that $f_{\pm} \in \Lp{2}(M)$
	and that $\modulus{f} = f_+ + f_-$.
	By the fact that we have shown $e^{-t\Delta_{g,\cW}}$ 
	is positive preserving, this means that
	for $u_{\pm} (x,t) = e^{-t\Delta_{g,\cW}} f_{\pm} \geq 0$	
	and we have that $u(x,t) = e^{-t \Delta_{g,\cW}} f = u_+(x,t) - u_-(x,t)$.

\item  Since $u_{\pm}(x,t) \geq 0$ are positive solutions,
	using inequality \eqref{eq:Harnack}, we have a precompact $U_x$ 
	with 
	\begin{align*} 
	\modulus{e^{-t \Delta_{g,\cW}}f(x)}  &= \modulus{u(x,t)} = u_+(x,t) + u_-(x,t) \\
		&\leq C(t,U_x) (u_+(y,t+\delta_t) + u_-(y,t+\delta_t)) \\
		&= C(t,U_x) \modulus{u(y,t + \delta_t)}
		= C(t,U_x)  \modulus{e^{-(t + \delta_t) \Delta_{g,\cW}}f(y)}
	\end{align*} 
	for almost-every $y \in U_x$.
	On integrating both sides over $U_x$ (which has $\omega_g(U_x) < \infty$
	by precompactness and the properties of the measure $\omega_g$) with respect
	to the variable $y$,
	we have that
	\begin{align*} 
	\modulus{e^{-t \Delta_{g,\cW}}f(x)} 
		&\leq \frac{C(t,U_x))}{\omega_g(U_x)} \int_{U_x} \modulus{e^{-(t + \delta_t) \Delta_{g,\cW}}f(y)}\ d\omega_g(y) \\
		&\leq \frac{C(t,U_x)}{\omega_g(U_x)^{\frac{1}{2}}} \norm{e^{-(t + \delta_t) \Delta_{g,\cW}f}},
	\end{align*}
	where the second inequality follows from the Cauchy-Schwartz
	inequality and since $\norm{\cdot}_{\Lp{2}(U_x)} \leq \norm{\cdot}_{\Lp{2}(M)}$.

\item By the self-adjointness of $\Delta_{g,\cW}$, via functional calculus 
	we obtain that $\norm{e^{-s\Delta_{g,\cW}}f} \leq \norm{f}$
	uniformly in $s$  and therefore, we obtain that for each fixed \((t, x)\),
	$$\modulus{e^{-t \Delta_{g,\cW}}f(x)} \leq 
		\frac{C(t,U_x)}{\omega_g(U_x)^{\frac{1}{2}}} \norm{f}.$$
	This exactly says that $f \mapsto (e^{-t\Delta_{g,\cW}}f)(x) \in \adj{(\Lp{2}(M))}$
	for every $f \in \Lp{2}(M)$ and therefore, by the Riesz Representation theorem,
	we obtain, for each fixed \((t, x)\), an $a_{t,x} \in \Lp{2}(M)$ such that
	$$ e^{-t \Delta_{g,\cW}}f(x) = \inprod{a_{t,x},f} = \int_{M} a_{t,x}(y)f(y)\ d\omega_g(y)$$
        for every \(f \in \Lp{2}\). Because \(t \mapsto e^{-\Delta t} f\) is a continuous semigroup mapping 
	\(\Lp{2}(M) \to \Lp{2}(M)\), we have that \(x \mapsto a_{t,x}\) is measurable for each $t > 0$.
	In fact, \((t, x) \mapsto a_{t,x}\) is jointly measurable by Lemma 4.51 in \cite{AB}.

\item We now define the symmetric function,
  $$\hk^{g,\cW}_t(x,y) := \inprod{a_{\frac{t}{2}, x}, a_{\frac{t}{2}, y}}$$
        We claim that \(\hk_t^{g,\cW}\) is a heat kernel: we use the by the semigroup property and self adjointness of \(e^{-t\Delta_{g,\cW}}\). For clarity, let us write \(\inprod{f(\cdot), g(\cdot)}\) to denote that the integration is over the \(\cdot\) variable. Then we have
	\[
        \begin{split}
        (e^{-t \Delta_{g,\cW}}f) (x) &= e^{-\frac{t}{2} \Delta_{g,\cW}} (e^{-\frac{t}{2} \Delta_{g,\cW}} f) (x) = \inprod{a_{t,x}, e^{-\frac{t}{2} \Delta_{g,\cW}} f} \\
        &= \inprod{e^{-\frac{t}{2} \Delta_{g,\cW}} a_{t,x}, f} = \inprod{\inprod{a_{t,\cdot}, a_{t,x}}, f} \\
        &= \inprod{\hk^{g,\cW}_t(x,(\cdot)), f}.
        \end{split}
        \]
        Next, since $e^{-t \Delta_{g,\cW}}$	is positive preserving, $\hk_t^{g,\cW}$ is real-valued. Moreover, since $\lim_{t\to 0} e^{-t\Delta_{g,\cW}}f = f$ in $\Lp{2}(M)$, we have that $\lim_{t \to 0} \hk^{g,\cW}_t(\mdot,y) \to  \delta(y)$ in the sense of distributions.

        By definition of the semigroup and $a_{t,x}$, we have that $x\mapsto e^{\Delta_{g,\cW}}f(x) = \inprod{a_{t,x},f} \in \Lp{2}(M)$
	and hence measurable in $x$ for each $f$. 
	On choosing \(f = a_{t,y}\), we obtain 
	measurability in \(x\) and symmetry of \(\rho_t\) yields measurability in \(y\),
	which shows that \(\rho_t(x, y)\) is measurable separately in both \(x\) and \(y\). 

\item To show heat-kernel is strictly positive, we first localise the semi-group.
	Let $\Omega \subset M$ be an open connected subset with compact closure and piecewise smooth boundary  and let $f \in \Lp{2}(\Omega)$.
	Observing that $\SobH[0]{1}(\Omega) \embed \cW$ embeds isometrically as a closed subspace via extension by zero outside of $\Omega$, we consider \(\SobH[0]{1}(\Omega)\) as a closed subspace of $\cW$. Likewise for $\Lp{2}(\Omega) \embed \Lp{2}(M)$, in which case the $\Lp{2}(M)$-orthogonal projection onto $\Lp{2}(\Omega)$ is just $f \mapsto \chi_{\Omega} f$.

        When $u \in \SobH[0]{1}(\Omega)$, we have that $\modulus{u} \in \SobH[0]{1}(\Omega) \subset \cW$ and that $v \in \cW$ with $\modulus{v} \leq \modulus{u}$ implies that $v \sgn u \in \SobH[0]{1}(\Omega)$,
	where $\sgn u = u/\modulus{u}$ for $u(x) \neq 0$ and $0$ for $u(x) = 0$. That is, $\SobH[0]{1}(\Omega)$ is a \emph{ideal} in $\cW$. Let $\Delta_{g,\SobH[0]{1}(\Omega)}$ denote the associated Dirichlet Laplacian inside $\Omega$ with respect to the energy $\inprod{\nabla u, \nabla v}_{\Lp{2}(\Omega, g)}$. Then , we see that (vi) in Theorem 4.1 in \cite{MVV} by Manavi, Vogt and Voigt is satisfied, hence we obtain that (ii) of the same theorem holds. That is,
 	$$ \modulus{e^{-t\Delta_{g, \SobH[0]{1}(\Omega)}} f} \leq e^{-t \Delta_{g, \cW}} \modulus{f}$$
	for all $f \in \Lp{2}(\Omega,g)$.

        Therefore, when $f \in \Lp{2}(\Omega,g)$ with $f \geq 0$, we have by using the non-negativity of $\Delta_{g, \SobH[0]{1}(\Omega)}$ (by (i) of this proposition) that
	$$ \modulus{e^{-t\Delta_{g, \SobH[0]{1}(\Omega)}} (\chi_{\Omega} f)} \leq e^{-t \Delta_{g, \cW}} \modulus{f}.$$

\item Now, we show that the localised semi-group is strictly positive: $e^{-t\Delta_{g,\SobH[0]{1}(\Omega)}}f (x) > 0$ for $x$-a.e. in $\Omega$ and $t>0$ whenever $f \geq 0$ but $f \neq 0$.
	That is essentially given by the proof of Theorem 4.5 in \cite{El-Maati}, which is done for a quadratic form in Euclidean space, but works in this context with minimal modification.
	Note first that by the compactness of $\close{\Omega}$, there exists a smooth metric $h$ on $\Omega$ that is quasi-isometric to $g$ on $\Omega$   (but not necessarily all of $M$) so that 
	$$\inprod{\nabla u, \nabla v}_{\Lp{2}(\Omega,g)} = \int_{\Omega} h((\sqrt{\det B}) B \nabla u, \nabla v)\ d\omega_h = \inprod{(\sqrt{\det B})B \nabla u, \nabla v}_{\Lp{2}(\Omega,h)},
	$$
	where $B$ are symmetric, bounded, measurable coefficients such that $h(Bu,v) = g(u,v)$.

	Corollary 2.11 in \cite{El-Maati} states the strict positivity of the semigroup  holds if and only if every subset $\Omega' \subset \Omega$ with the property that $\chi_{\Omega'} u \in \Sob[0]{1}(\Omega)$ for every $u \in \Sob[0]{1}(\Omega)$ must satisfy either $\omega_h(\Omega') = 0$ or $\omega_h(\Omega\setminus \Omega') = 0$. 
	For contradiction, suppose there exists such an $\Omega'$ with $\chi_{\Omega'} u \in \SobH[0]{1}(\Omega)$ for every $u \in \Sob[0]{1}(\Omega)$ but $\omega_h(\Omega') > 0$ and $\omega_h(\Omega \setminus \Omega') > 0$.

        Now, there exists $x_0 \in \Omega$ such that for all $\eta > 0$,
	$$\omega_h(B(x_0, \eta) \intersect \Omega') > 0\quad\text{and}\quad\omega_h(B(x_0, \eta) \intersect (\Omega\setminus\Omega'))> 0.$$
	If this weren't the case, then for all $x \in \Omega$, there exists $\eta_x > 0$ such that $\omega_h(B(x, \eta_x) \intersect \Omega) = 0$ or $\omega_h(B(x_0, \eta_x) \intersect (\Omega\setminus\Omega')) = 0$. In this case, we write 
	\begin{align*} 
	\Omega_1 &= \Union \set{B(x,\eta_x): \omega_h(B(x, \eta_x) \intersect \Omega') = 0}\ \text{and}\\
	\Omega_2 &= \Union \set{B(x,\eta_x): \omega_h(B(x, \eta_x) \intersect(\Omega \setminus \Omega') = 0}
	\end{align*}
	to obtain two open sets which satisfy $\Omega = \Omega_1 \union \Omega_2$ and $\Omega_1 \intersect \Omega_2 = \emptyset$ (metric balls have positive measure). Since $\Omega$ is connected, either $\Omega_1 = \Omega$ or $\Omega_2 = \Omega$. If $\Omega = \Omega_1$, then we would conclude that $\omega_h(\Omega') = 0$ contrary to assumption. Similarly if $\Omega = \Omega_2$ then $\omega_h(\Omega \setminus \Omega') = 0$ contrary to assumption.

	Having established the existence of $x_0$, given any $\eta > 0$ sufficiently small so that $B(x_0,\eta) \subset \Omega$, let $u \in \Ck[c]{\infty}(\Omega)$ be such that $u(x) = 1$ on $B(x_0, \eta/2)$ and $0$ outside $B(x_0,\eta)$. By assumption, $\chi_{\Omega'} u \in \SobH[0]{1}(\Omega)$ and  given that $\nabla$ is a local operator, $\nabla (\chi_{\Omega'} u)(x) = (\nabla u)(x)$ for $x$-a.e. in $\Omega'$ and $0$ otherwise. Then we obtain that
	$$\chi_{\Omega'}\nabla u = \nabla(\chi_{\Omega'} u)$$
	inside $\Omega$.
	Combining this with the assumption that the  closure of $\Omega$ is compact with piecewise smooth boundary, we obtain that $\chi_{\Omega'} u \in \Sob{1,p}(\Omega)$ for all $p \in [1,\infty]$. 
	Also since $\partial \Omega$ is piecewise smooth, from the Sobolev embedding theorem for manifolds with piecewise smooth boundary (c.f. Theorem 2.34 in \cite{Aubin}), we get that $\chi_{\Omega'} u = v$ for some continuous $v$.

	Then, for all $x \in B(x_0,\eta) \intersect \Omega'$ and $y \in B(x_0,\eta) \intersect (\Omega \setminus \Omega')$, 
	$$1 = \chi_{\Omega'}(x)u(x) = \chi_{\Omega'}(x) u(x) - \chi_{\Omega'}(y)u(y) = \modulus{v(x) - v(y)}$$ 
	contradicting that $v$ is continuous on
        $$B(x_0, \eta) = (B(x_0,\eta) \intersect \Omega') \bigcup (B(x_0,\eta) \intersect (\Omega \setminus \Omega')).$$

\item Next we show that $\omega_g\set{y \in \Omega: \hk^{g,\cW}_{t}(x,y) = 0\ \text{x-a.e. in}\ \Omega} = 0$ for every $t>0$. 
	That is, we want to show that $x \mapsto \hk^{g,\cW}_{t}(x,y)$ is not identically zero for almost every $y \in \Omega$.
	To obtain a contradiction, suppose not.
	That is, for some $t > 0$, we have that $\omega_g\set{y \in \Omega: \hk^{g,\cW}_{t}(x,y) = 0\ \text{x-a.e. in}\ \Omega} > 0$.
	Since $M$ is $\sigma$-finite, we can find a set $P \subset \Omega$ with $\omega_g(P) > 0$ and for $y \in P$, we have that $\hk^{g,\cW}_t(x,y) = 0$ for $x$-a.e. in $\Omega$. 
	Then for any $y \in P$ and $f \in \Lp{2}(M, g)$ vanishing outside of $P$, we have
	$$ (e^{-t\Delta_{g,\cW}} f)(y) = \int_{P} \hk_t^{g,\cW}(x,y) f(x)\ d\omega_g(x) = 0.$$
	In particular, setting $f = \chi_{P}$, we obtain
	$$ 0 = \inprod{e^{-t\Delta_{g,\cW}} \chi_{P}, \chi_{P}} = \norm{  e^{-\frac{t}{2}\Delta_{g,\cW}} \chi_{P}}_{\Lp{2}}.$$ 
	That is, $e^{-\frac{t}{2}\Delta_{g,\cW}} \chi_P = 0$ in $\Lp{2}(M, g)$ and iterating this procedure in this way, we obtain that
	$ e^{-\frac{t}{2^i}\Delta_{g,\cW}}\chi_P = 0$ for all $i \in \Na$.
	Therefore, $\lim_{i \to \infty} e^{-\frac{t}{2^i}\Delta_{g,\cW}} \chi_P = 0$ in $\Lp{2}(M, g)$.
	However, since $\Delta_{g,\cW}$ is non-negative self-adjoint, operator theory yields that $\chi_P = \lim_{i \to \infty} e^{-\frac{t}{2^i}\Delta_{g,\cW}}\chi_P$.
        But \(\chi_P \equiv 1\) on the positive measure set $P$ contradicting that $\lim_{i \to \infty} e^{-\frac{t}{2^i}\Delta_{g,\cW}}\chi_P = 0$ in $\Lp{2}(M, g)$.
	
\item Finally, we obtain the strict positivity of the heat kernel. By (ii) and (vi) we have for any $f \in \Lp{2}(M, g)$ with $f \geq 0$,
        $$\inprod{\hk^{g,\cW}_t(x,\cdot), f} = (e^{-t \Delta_{g,\cW}}f) (x) \geq 0$$
        and hence $\hk_t^{g,\cW} \geq 0$ for $x$-a.e. in $M$.

        Suppose for a contradiction to $\hk_t^{g,\cW} > 0$ for all $t > 0$ and $x$-a.e. in $M$ that there exists positive measure sets $M'$ and $M''$ and a $t > 0$ such that for $y$-a.e. in $M'$, $\hk^{g,\cW}_t(x,y) = 0$ for $x$ a.e. in $M''$.
	Since $M = \union_{j=1}^\infty \Omega_j$, where each $\Omega_j$ has piecewise smooth boundary and has compact closure, 
	there exists a $K$ such that for $\Omega = \union_{j=1}^K \Omega_j$ (a set with piecewise smooth boundary) we have $\omega_g(M' \intersect \Omega) > 0$ and $\omega_g(M'' \intersect \Omega) > 0$.
        
	But then applying (ix) and (vii) on $\Omega$ to $f = \chi_{\Omega} \hk^{g,\cW}_{t/2} \geq 0$ we obtain that for $y$-a.e. in $\Omega$,
	$$0 < e^{-\frac{t}{2}} \Delta_{g,\SobH[0]{1}(\Omega)}(\chi_{\Omega} \hk^{g,\cW}_{\frac{t}{2}}(\cdot,y))(x) \leq e^{-\frac{t}{2}} \Delta_{g,\cW}\hk^{g,\cW}_{\frac{t}{2}}(\cdot,y))(x)  = \hk^{g,\cW}_{t}(x,y)$$
	for almost every $x$ in $\Omega$ contradicting that $\omega_g(M'') > 0$.
        This shows that for almost every $y \in M$, $x \mapsto \hk^{g,\cW}_t(x,y) > 0$ for $x$-a.e.
	\qedhere
\end{enumerate} 
\end{proof}

\begin{rem}
We note that the Harnack-type estimate \eqref{eq:Harnack} that we assume is very 
weak, i.e., it is not defined on cylinders or even 
parabolic cylinders. The proof shows that the existence 
of the heat kernel only requires such an estimate. 
As we shall see in the next section, a stronger
estimate is required for regularity of solutions. 
\end{rem}

\section{The Harnack inequality}
\label{sec:harnack}

In this section, we demonstrate that a
stronger form of the  Harnack 
estimate than \eqref{eq:Harnack} holds for rough 
metrics for which  \eqref{eq:Harnack} is a consequence.
To describe this estimate, fix a collection of 
locally comparable charts $\psi_x:V_x \to \R^n$ 
for each $x \in M$ such that $\psi_x(V_x) = B_{r_x}$,
where $B_{r_x}$ is a ball of radius $r_x > 0$. 
Then, let
$\delta_x(\cdot, \cdot) = \pullb{\psi_x}\delta(\cdot, \cdot)_{\R^n}$
denote the pullback metric with $\Leb_x = \pullb{\psi_x}\Leb$
and $d_x(y,y') = \modulus{\psi_x(y') - \psi_x(y)}_{\R^n}$.
Recall that there is a $C_x \geq 1$
with $C_x^{-1}  \modulus{u}_{\delta_x} \leq \modulus{u}_g \leq C_x \modulus{u}_{\delta_x}$
almost-everywhere in $V_x$.

Fix $0 < \kappa < \tau < \infty$ and an $0 < \epsilon < \kappa$,
and for $x \in M$ and $t \in (\kappa, \tau)$, define
\begin{equation}
\label{eq:paradef}
\begin{split} 
Q^-_{(x,t)}(\kappa, \tau, \epsilon) &= \set{(y,s) \in V_x \times (\kappa,\tau): s \in \cbrac{t - \frac{3}{4}\epsilon^2, t - \frac{1}{4} \epsilon^2},\ 
	d_x(x,y) < \frac{1}{2} \epsilon} \\
Q^+_{(x,t)}(\kappa, \tau, \epsilon) &= \set{(y,s) \in V_x \times (\kappa,\tau): s \in \cbrac{t + \frac{3}{4}\epsilon^2, t + \epsilon^2},\ 
	d_x(x,y) < \frac{1}{2} \epsilon}
\end{split}
\end{equation}

Let $g(u,v) = \delta_x(A_x u,v)$
and $a_x = \sqrt{\det A_x}$ so that
$\omega_x(y) = a_x(y)d\Leb_x(y)$, and note
that:
\begin{equation}
\label{eq:constbd}
\begin{split}
&C_x^{-2} \modulus{u}_{\delta_x}^2 \leq \delta_x(A_x u, u) \leq C_x^2 \modulus{u}_{\delta_x}^2 \\
&C_x^{-\frac{n}{2}} \leq a_x \leq C_x^{\frac{n}{2}}.
\end{split}
\end{equation}
See Section 3.3 in \cite{BRough} for details. 

\begin{lemma}[Localisation Lemma]
\label{eq:soln1}
Let $u(\cdot, \cdot): M \times \R_+$  be a a solution to the $\Delta^{g,\cW}$ heat equation.
Then, for all $v \in \SobH[0]{1}(V_x,g)$, 
	\begin{equation} 
	\label{eq:testfn1}	 
	\inprod{a_x \partial_t u, v}_{\Lp{2}(V_x, \delta_x)} 
		= \inprod{ B_x \nabla u, \nabla v}_{\Lp{2}(V_x, \delta_x)},
	\end{equation}
where $B_x = a_x A_x$. Moreover,
\begin{equation}
\label{eq:coeff}
C_x^{-2} a_x \modulus{\xi}_{\delta_x}^2 \leq \delta_x(B_x \xi, \xi) 
	\leq C_x^2 a_x \modulus{\xi}_{\delta_x}^2
\end{equation} 
for every $\xi \in T_xM$ and almost everywhere in $V_x$. 
\end{lemma}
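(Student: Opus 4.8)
The plan is to read off \eqref{eq:testfn1} from the variational (weak) formulation of the heat equation attached to the form $\sE_{g,\cW}$, then to localise to the chart $V_x$ by testing against functions supported there, and finally to convert the resulting identity from $g$ and $\omega_g$ to $\delta_x$ and $\Leb_x$; the ellipticity bound \eqref{eq:coeff} will drop out of \eqref{eq:constbd} at the end.

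First, recall that $\Delta_{g,\cW}$ is, by the first representation theorem, the self-adjoint operator associated with the closed form $\sE_{g,\cW}(w,v) = \inprod{\nabla w, \nabla v}_{\Lp{2}(M,g)}$ on $\dom(\sE_{g,\cW}) = \cW$, so that $\inprod{\Delta_{g,\cW} w, v}_{\Lp{2}(M,g)} = \inprod{\nabla w, \nabla v}_{\Lp{2}(M,g)}$ for every $w \in \dom(\Delta_{g,\cW})$ and $v \in \cW$. Since $u(\cdot,t) \in \dom(\Delta_{g,\cW})$ and $\partial_t u(\cdot,t) = \Delta_{g,\cW} u(\cdot,t)$ for each $t > 0$, this gives the variational identity
\[
\inprod{\partial_t u(\cdot,t), v}_{\Lp{2}(M,g)} = \inprod{\nabla u(\cdot,t), \nabla v}_{\Lp{2}(M,g)} \qquad\text{for all } v \in \cW.
\]
Now fix $v \in \SobH[0]{1}(V_x, g)$. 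Its extension by zero lies in $\SobH[0]{1}(M) \subset \cW$: any $\phi \in \Ck[c]{\infty}(V_x)$ extends to an element of $\Ck[c]{\infty}(M) \subset \cW$, and taking $\SobH{1}$-closures (the $\SobH{1}(V_x,g)$ and $\SobH{1}(M,g)$ norms of a zero-extension agree) gives the claim, just as in the proof of \Cref{prop:MainRed}. Since $\nabla$ is a local operator, the extended one-form $\nabla v$ vanishes outside $V_x$ and coincides there with the differential of $v$, so both pairings in the displayed identity reduce to integrals over $V_x$.

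It remains to rewrite those integrals against $\delta_x$ and $\Leb_x$. As $g$ and $\delta_x$ are quasi-isometric on $V_x$, the spaces $\SobH[0]{1}(V_x, g)$ and $\SobH[0]{1}(V_x, \delta_x)$ agree with equivalent norms, and the exterior derivative is insensitive to the metric, so $\nabla u$ and $\nabla v$ are unambiguously defined one-forms on $V_x$. Using the pointwise relation between $g$ and $\delta_x$ together with $d\omega_g = a_x\, d\Leb_x$ (where $a_x = \sqrt{\det A_x}$), a direct computation turns $\inprod{\nabla u, \nabla v}_{\Lp{2}(V_x, g)}$ into $\inprod{B_x \nabla u, \nabla v}_{\Lp{2}(V_x, \delta_x)}$ with $B_x = a_x A_x$ as in the statement, while the scalar pairing only sees the change of measure: $\inprod{\partial_t u, v}_{\Lp{2}(V_x, g)} = \inprod{a_x \partial_t u, v}_{\Lp{2}(V_x, \delta_x)}$. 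Equating the two expressions gives \eqref{eq:testfn1}. Finally, \eqref{eq:constbd} says the eigenvalues of $A_x$ lie in $[C_x^{-2}, C_x^2]$, i.e. $C_x^{-2}\modulus{\xi}_{\delta_x}^2 \leq \delta_x(A_x\xi,\xi) \leq C_x^2\modulus{\xi}_{\delta_x}^2$; multiplying through by $a_x > 0$ and recalling $B_x = a_x A_x$ yields \eqref{eq:coeff}.

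The argument is essentially bookkeeping, and the one point deserving care is the first step: converting the strong equation $\partial_t u = \Delta_{g,\cW} u$ into the variational identity tested against all of $\cW$, which relies on the form-representation of $\Delta_{g,\cW}$, together with the (routine) verification that extension by zero embeds $\SobH[0]{1}(V_x, g)$ into $\cW$ compatibly with $\nabla$.
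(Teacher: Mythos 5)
Your proposal is correct and follows essentially the same route as the paper: use the form representation of $\Delta_{g,\cW}$ to obtain $\inprod{\partial_t u, v} = \inprod{\nabla u, \nabla v}$ for test functions, localise to $V_x$, and rewrite the pairings via $d\omega_g = a_x\, d\Leb_x$ and $g(\cdot,\cdot) = \delta_x(A_x\cdot,\cdot)$, with \eqref{eq:coeff} dropping out of \eqref{eq:constbd}. The only difference is cosmetic: you spell out the zero-extension/density step passing from $\Ck[c]{\infty}(V_x)$ to all of $\SobH[0]{1}(V_x,g)$, which the paper leaves implicit.
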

\begin{proof}
Since for $t > 0$ the solution $u \in \dom(\Delta_{g,\cW}) \subset \Lp{2}(M)$, we
note that in particular, for any $v \in \Ck[c]{\infty}(M)$, 
$\inprod{\partial_t u, v} = \inprod{\nabla u, \nabla v}$.
Choosing $\spt v \subset V_x$, we note that $\inprod{\cdot, v} = \inprod{\cdot, v}_{\Lp{2}(V_x,g)}$
and hence,
$$\inprod{\partial_t u, v} = \int_{V_x} \partial_t u\conj{v}\ d\omega_g 
	= \int_{V_x} a_x \partial_t  u \conj{v}\ d\Leb_x
	= \inprod{\partial_t (a_x u), v}_{\Lp{2}(U_x, \delta_x)},$$
where the last equality follows from the fact that 
$a_x(y) \partial_t u(y,t) = \partial_t (a_x(y)u(y,t).$
Similarly,
$$\inprod{\nabla u, \nabla v} 
	= \int_{V_x} g(\nabla u, \nabla v)\ d\omega_g
	= \int_{V_x} \delta_x(a_x A_x \nabla u, \nabla v)\ d\Leb_x
	= \inprod{B_x \nabla u, \nabla v}_{\Lp{2}(U_x, \delta_x)}.$$
The estimate on $B_x$ follows immediately from
\eqref{eq:constbd}.
\end{proof} 

The Harnack inequality we prove is the following. We provide two proofs of this result, 
the first proof simply on noting that we can deduce this from noting that the results
of \cite{SC} localise, and the second from degenerate parabolic equation results of \cite{CS}.

\begin{thm}
\label{thm:Harnack}
Let $u(x,t) \geq 0$ be a
solution to the $\Delta_{g,\cW}$ heat equation 
that is non-negative in $(\kappa, \tau)$. 
Then, there exists $\gamma = \gamma(n, C_x, \kappa, \tau) > 0$ such that 
$$ \sup_{(y,s) \in Q^-_{(x,t)}(\kappa, \tau,\epsilon)} u(y, s) 
	\leq \gamma \inf_{(y,s) \in Q^+_{(x,t)}(\kappa,\tau,\epsilon)} u(y,s)$$
for all $\epsilon < \min\set{ \sqrt{t - \kappa}, \sqrt{\tau - t}, r_x}$.
\end{thm}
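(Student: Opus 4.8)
The plan is to use the Localisation Lemma to rewrite the equation, on each locally comparable chart, as a weighted uniformly parabolic equation in divergence form with bounded measurable coefficients on a Euclidean ball, and then invoke Moser's parabolic Harnack inequality in the formulation of \cite{SC}; a second proof runs the identical reduction but concludes via the degenerate-parabolic Harnack inequality of \cite{CS}.

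First I would fix the chart $\psi_x : V_x \to B_{r_x} \subset \R^n$ and transport the whole problem to $B_{r_x}$, identifying $V_x$ with $B_{r_x}$, $\delta_x$ with the Euclidean metric, $\Leb_x$ with Lebesgue measure, and $d_x$ with the Euclidean distance. By \Cref{eq:soln1}, for each $t \in (\kappa,\tau)$ the solution satisfies
\[
\inprod{a_x \partial_t u, v}_{\Lp{2}(B_{r_x})} = \inprod{B_x \nabla u, \nabla v}_{\Lp{2}(B_{r_x})} \qquad \forall\, v \in \SobH[0]{1}(B_{r_x}),
\]
with $B_x = a_x A_x$ symmetric, bounded and measurable, obeying the two-sided ellipticity bound \eqref{eq:coeff}, while $C_x^{-n/2} \leq a_x \leq C_x^{n/2}$ by \eqref{eq:constbd}. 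This exhibits $u$ as a local weak solution on $B_{r_x} \times (\kappa,\tau)$ of the weighted divergence-form equation $a_x\partial_t u = \dive(B_x\nabla u)$, in which $a_x$ is comparable to $1$ and $B_x$ is uniformly elliptic, both with constants controlled by $n$ and $C_x$. The regularity needed for $u$ to be a genuine local weak solution — namely $u$ and $\nabla u$ in $\Lp{2}$ locally in space, $\partial_t u$ in the appropriate dual, and $t \mapsto u(\cdot,t)$ continuous into $\Lp{2}$ locally — is inherited from $u \in \Ck{1}((0,\infty), \dom(\Delta_{g,\cW}))$ together with $\dom(\Delta_{g,\cW}) \subset \cW \subset \SobH{1}(M)$.

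With the reduction in place I would invoke the parabolic Harnack inequality. One route: apply the result of \cite{SC} to the weighted ball $(B_{r_x}, a_x\,\Leb_x)$ equipped with the Dirichlet form $v \mapsto \inprod{B_x \nabla v, \nabla v}$; since $a_x$ is pinched between positive constants, the measure is volume-doubling and supports a scale-invariant $\Lp{2}$-Poincaré inequality with constants depending only on $n$ and $C_x$, so the local parabolic Harnack estimate holds on sub-cylinders with a constant $\gamma = \gamma(n, C_x)$ (scale-invariance in the Moser cylinder means no $\epsilon$-dependence; allowing dependence on $\kappa,\tau$ as in the statement merely covers the requirement that the cylinders $Q^\pm_{(x,t)}(\kappa,\tau,\epsilon)$ sit inside $B_{r_x} \times (\kappa,\tau)$, which the hypothesis $\epsilon < \min\set{\sqrt{t-\kappa},\sqrt{\tau-t},r_x}$ guarantees, using $u \geq 0$ throughout $(\kappa,\tau)$). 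A second route: observe that $a_x \in A_2(\R^n)$ trivially, being bounded above and below, and apply the Harnack inequality for degenerate parabolic equations of \cite{CS}. Pulling back along $\psi_x$ returns the estimate on $M$, since $Q^\pm_{(x,t)}(\kappa,\tau,\epsilon)$ are defined intrinsically through $d_x$.

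The hard part is not the Harnack machinery, which is classical, but matching the hypotheses of the cited references: verifying that $u$, presented only as a $\Ck{1}$-curve into $\dom(\Delta_{g,\cW})$, is a local weak solution in the precise technical sense those theorems demand — this is exactly where the Localisation Lemma does the essential work, converting an abstract semigroup identity into a concrete PDE — and handling the time-weight $a_x$. The cleanest way to deal with $a_x$ is to keep it as the reference measure (or absorb it into $\partial_t u$); one should check this is harmless, which it is because $a_x$ is time-independent and bounded above and below, so it changes neither the class of weak solutions nor the doubling and Poincaré inequalities. A minor point is to run Moser iteration only on cylinders compactly contained in $B_{r_x} \times (\kappa,\tau)$; the constraint on $\epsilon$ has been arranged precisely so that $Q^\pm_{(x,t)}$ qualify, and the resulting constant does not degenerate as $\epsilon \to 0$.
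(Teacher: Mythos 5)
Your proposal is correct and follows essentially the same route as the paper: localise via \Cref{eq:soln1} to a weighted divergence-form parabolic equation with coefficients pinched in terms of $C_x$, then conclude either by the (localised) estimate of \cite{SC} or by the $A_2$ degenerate-parabolic Harnack inequality of \cite{CS}. The only substantive step the paper carries out that you merely flag is the time-shift $u_\kappa(x,t)=u(x,t+\kappa)$ together with \Cref{lem:shiftsol} and \Cref{Lem:testfn2}, which convert the pointwise-in-time identity into the time-integrated weak formulation (via Banach-valued integration by parts) that \cite{CS} actually requires.
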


\begin{proof}[Proof using \cite{SC}]
By the use of Lemma \ref{eq:soln1}, which illustrates that solution $u$ of the $\Delta_{g,\cW}$ heat equation is a distributional solution, we can expect to use the methods of Moser's parabolic Harnack inequality \cite{MR0159139,MR0288405}. However, Lemma \ref{eq:soln1} shows that we have a heat equation in a weighted Sobolev space, weighted by $a_x$, and therefore, we cannot immediately apply Moser's results. But, the factor $a_x > 0$ almost-everywhere, time independent and measurable, which is precisely the situation described in \cite[Section 4]{SC}.  Unfortunately, the results of \cite{SC} demand that the rough metric is globally comparable to a complete, smooth metric of Ricci curvature bounded from below. However, upon more careful inspection of \cite{SC}, we can see that the estimate is entirely local and it is only through the constants that the curvature bounds and global comparability enter. In our situation, we do not obtain uniform, global constants (and indeed do not expect them at this level of generality), but such control is not necessary for our applications. Thus, on noting that the results in \cite{SC} localise, we can apply Theorem 5.3 in \cite{SC} which is precisely our Harnack inequality stated in the theorem.
\end{proof}

In the compact setting, this theorem 
is immediate from Theorem 5.3 in \cite{SC} without having to note
its local nature. There, 
he proves such estimates for general operators
$L = -a \divv A \nabla$, where $a \in \Lp{\infty}(M)$
and $A \in \Lp{\infty}(T^{1,1}M)$ and symmetric
for smooth metric $h$ with a uniform lower bound
on Ricci curvature.
The key to note is that a rough metric in the 
compact setting is globally comparable to a smooth one, 
i.e., there exists a global constant $C \geq 1$ 
and a 
such that
$$ C^{-1} \modulus{u}_{h} \leq \modulus{u}_g \leq C \modulus{u}_{h}$$
for $u \in T_x M$ for almost-every $x \in M$.
By the virtue of compactness, it is immediate
that the Ricci curvature of $h$ is bounded below
by a uniform constant. 
In fact, this procedure also works 
for rough metrics $g$ which 
are uniformly close to some complete metric $h$ with 
lower bound on Ricci curvature in the sense
we have just written, rather than having to assume compactness.

We emphasise that the goal in \cite{SC}
is to quantify the constants $C(t,U_x)$
appearing in Theorem \ref{thm:Harnack}. In what we present here, we do not have
the ability to control these constants. However, as
Proposition \ref{prop:MainRed} illustrates, this is not necessary 
in order to obtain the existence of the heat kernel.
Quantifying such estimates is, however, still 
extremely important and useful when it can be done, 
since this can be used to obtain better global regularity for
solutions than we do here.

We now give a proof using the results of \cite{CS}.
From here on, we assume that the dimension of $M$ is $3$ or greater. 

Fix $0 < \kappa < \tau < \infty$ and 
define $u_\kappa(x,t) = u(x,t+\kappa)$. Then we note the following. 
\begin{lemma}
\label{lem:shiftsol}
We have that: 
\begin{enumerate}[(i)] 
\item $u_\kappa$ is a solution to the $\Delta_{g,\cW}$ heat equation, 
\item there is a constant $C_\kappa \geq 0$
	which depends on $\kappa$ 
	such that $\norm{\nabla u_\kappa} \leq C_\kappa$.
\end{enumerate} 
\end{lemma}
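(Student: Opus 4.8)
The plan is to verify both claims directly from the definition of a solution to the $\Delta_{g,\cW}$ heat equation and from elementary properties of the analytic semigroup $e^{-t\Delta_{g,\cW}}$. For (i), I would start from the representation formula $u(\cdot,t) = e^{-t\Delta_{g,\cW}}u_0$ valid for any solution. Then $u_\kappa(\cdot,t) = u(\cdot,t+\kappa) = e^{-(t+\kappa)\Delta_{g,\cW}}u_0 = e^{-t\Delta_{g,\cW}}\bigl(e^{-\kappa\Delta_{g,\cW}}u_0\bigr)$ by the semigroup property. Setting $w_0 := e^{-\kappa\Delta_{g,\cW}}u_0 \in \Lp{2}(M)$, we see $u_\kappa$ is precisely the solution with initial data $w_0$: it lies in $\Ck{1}((0,\infty),\dom(\Delta_{g,\cW}))$ because $t\mapsto u(\cdot,t+\kappa)$ is a time-translate of a $\Ck{1}$ curve with values in $\dom(\Delta_{g,\cW})$, it satisfies $\partial_t u_\kappa = \Delta_{g,\cW}u_\kappa$ on $(0,\infty)$ since $\partial_t u = \Delta_{g,\cW}u$ on $(\kappa,\infty)\supset(0,\infty)+\kappa$, and $\lim_{t\to 0}u_\kappa(\cdot,t) = \lim_{t\to 0}e^{-t\Delta_{g,\cW}}w_0 = w_0$ in $\Lp{2}(M)$ by strong continuity of the semigroup.

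For (ii), the key point is that the semigroup is smoothing away from $t=0$ because $\Delta_{g,\cW}$ is non-negative and self-adjoint. Since $\dom(\sqrt{\Delta_{g,\cW}}) = \cW$ with $\norm{\sqrt{\Delta_{g,\cW}}f} = \norm{\nabla f}$ for all $f\in\cW$, it suffices to bound $\norm{\sqrt{\Delta_{g,\cW}}u_\kappa(\cdot,t)}$ uniformly in $t>0$. Write $u_\kappa(\cdot,t) = e^{-t\Delta_{g,\cW}}w_0$; by the spectral theorem applied to the non-negative self-adjoint operator $\Delta_{g,\cW}$, we have
\[
\norm{\sqrt{\Delta_{g,\cW}}\,e^{-t\Delta_{g,\cW}}w_0}^2 = \int_{[0,\infty)} \lambda\, e^{-2t\lambda}\, d\langle E_\lambda w_0, w_0\rangle \leq \Bigl(\sup_{\lambda\geq 0}\lambda e^{-2t\lambda}\Bigr)\norm{w_0}^2 = \frac{1}{2et}\norm{w_0}^2,
\]
where $E_\lambda$ is the spectral resolution of $\Delta_{g,\cW}$. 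Hence $\norm{\nabla u_\kappa(\cdot,t)} \leq (2et)^{-1/2}\norm{w_0}$. This is not yet uniform in $t$ near $0$, so I would instead use a slightly larger shift: for $t>0$, $u_\kappa(\cdot,t) = e^{-(t+\kappa/2)\Delta_{g,\cW}}\bigl(e^{-\kappa/2\,\Delta_{g,\cW}}u_0\bigr)$, so
\[
\norm{\nabla u_\kappa(\cdot,t)} \leq \frac{1}{\sqrt{2e(t+\kappa/2)}}\,\norm{e^{-\kappa/2\,\Delta_{g,\cW}}u_0} \leq \frac{1}{\sqrt{e\kappa}}\,\norm{u_0} =: C_\kappa,
\]
using $\norm{e^{-s\Delta_{g,\cW}}} \leq 1$ for all $s\geq 0$. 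This gives the uniform bound with $C_\kappa$ depending only on $\kappa$ and $\norm{u_0}$.

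The argument is essentially routine, so there is no serious obstacle; the only point requiring mild care is that the naive shift by $\kappa$ gives a bound that still degrades as $t\to 0^+$, which is repaired by splitting the shift as $\kappa = \kappa/2 + \kappa/2$ (equivalently, applying the smoothing estimate at time $t+\kappa/2 \geq \kappa/2$ and the contractivity of $e^{-(\kappa/2)\Delta_{g,\cW}}$ separately). One should also note that $C_\kappa$ depends on $u$ through $\norm{u_0}$; this is harmless for the later application, but if a bound uniform over a family of solutions is wanted, one would phrase it in terms of $\norm{u_0}$ explicitly.
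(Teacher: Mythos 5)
Your proof is correct and takes essentially the same route as the paper: part (i) via the semigroup property applied to the time-translate, and part (ii) via the smoothing property of $e^{-t\Delta_{g,\cW}}$ through the functional calculus of the non-negative self-adjoint operator. The only (harmless) difference is that the paper commutes $\sqrt{\Delta_{g,\cW}}$ past $e^{-t\Delta_{g,\cW}}$ and takes the implicit constant $C_\kappa = \norm{\sqrt{\Delta_{g,\cW}}\,e^{-\kappa\Delta_{g,\cW}}u_0}$, whereas you quantify it explicitly as $(e\kappa)^{-1/2}\norm{u_0}$ via the spectral bound $\sup_{\lambda\ge 0}\lambda e^{-2t\lambda}=(2et)^{-1}$; both constants depend on $u_0$ as well as $\kappa$, which is fine for the later application.
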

\begin{proof}
Note that by definition, and using the semigroup property, 
$$u_\kappa(x,t) = e^{-(t + \kappa)\Delta_{g,\cW}} u_0 
	=e^{-t \Delta_{g,\cW}}e^{-\kappa\Delta_{g,\cW}}u_0
	=e^{-t \Delta_{g,\cW}}u(x,\kappa).$$ 
Therefore, it is immediate that $u_\kappa$ is a solution
to the heat equation. 

To see the bound, we compute: 
\begin{multline*} 
\norm{ \nabla u_\kappa(\cdot, t)} 
	= \norm{ \sqrt{\Delta_{g,\cW}} u_\kappa(\cdot, t)} 
	= \norm{ \sqrt{\Delta_{g,\cW}} e^{-t\Delta_{g,\cW}} e^{-\kappa \Delta_{g,\cW}}u_0} \\
	= \norm{ e^{-t \Delta_{g,\cW}} \sqrt{ \Delta_{g,\cW}} e^{-\kappa \Delta_{g,\cW}}u_0}
	\leq \norm{\sqrt{ \Delta_{g,\cW}} e^{-\kappa \Delta_{g,\cW}}u_0},
\end{multline*}
where the penultimate equality follows from the fact that
$\ran(e^{-\kappa \Delta_{g,\cW}}) \subset \dom(\Delta_{g,\cW}^\alpha)$
for all $\alpha > 0$ and via functional calculus.
The proof is complete on setting the constant 
$C(\kappa) = \norm{\sqrt{ \Delta_{g,\cW}} e^{-\kappa \Delta_{g,\cW}}u_0}$.
\end{proof}

In what is to follow, we shall require some facts about 
Lebesgue and Sobolev space theory for functions valued 
in Banach spaces. While the book \cite{CH} discusses these issues
in detail, an excellent overview of this topic
is contained in the thesis \cite{Kreuter}.

As in \cite{CS}, for $T > 0$, define 
$$ W(T) = \set{ w \in \Lp{2}( (0,T); \SobH[0]{1}(V_x,g)): \partial_t w \in \Lp{2}((0,T); \Lp{2}(V_x, g))},$$
and let $W_0(T) = \set{ w \in W: w(0) = w(T) = 0}.$

\begin{lemma}
\label{Lem:testfn2} 
The solution $u_\kappa \in \Lp{2}((0, \tau - \kappa); \SobH{1}(V_x, g))$
and is a weak solution in the following sense:
\begin{equation}
\label{eq:testfn2} 
\int_{0}^{\tau - \kappa} \inprod{a_x u_\kappa(t), \partial_t w(t)}_{\Lp{2}(V_x,\delta_x)}\ dt 
		= -\int_{0}^{\tau - \kappa } \inprod{B_x \nabla u_\kappa(t), \nabla w(t)}_{\Lp{2}(V_x, \delta_x)}\ dt
\end{equation}
for all $w \in W_0(\tau - \kappa)$.
\end{lemma}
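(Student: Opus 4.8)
The plan is to deduce \eqref{eq:testfn2} from the pointwise-in-time identity \eqref{eq:testfn1} of \Cref{eq:soln1} by integrating by parts in the time variable, after first pinning down the regularity of $u_\kappa$ so that every term is meaningful. By \Cref{lem:shiftsol}(i), $u_\kappa$ solves the $\Delta_{g,\cW}$ heat equation, and since $[\kappa,\tau]\subset(0,\infty)$ the definition of a solution gives $u_\kappa\in C^1([0,\tau-\kappa],\dom(\Delta_{g,\cW}))$ with no endpoint trouble; hence $t\mapsto u_\kappa(t)$ is continuous into $\dom(\Delta_{g,\cW})\subset\cW\subset\SobH{1}(M)$ and $t\mapsto\partial_t u_\kappa(t)=\Delta_{g,\cW}u_\kappa(t)$ is continuous into $\Lp{2}(M)$, so both are bounded on the compact interval. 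Restricting to the precompact chart $V_x$, on which $\omega_g=a_x\Leb_x$ with $a_x$ bounded above and below and $g$ comparable to $\delta_x$ by \eqref{eq:constbd}, and using in addition the uniform bounds $\norm{\nabla u_\kappa(t)}\leq C_\kappa$ from \Cref{lem:shiftsol}(ii) and $\norm{u_\kappa(t)}\leq\norm{u_0}$, this yields $u_\kappa\in\Lp{2}((0,\tau-\kappa);\SobH{1}(V_x,g))$ and $\partial_t u_\kappa\in\Lp{2}((0,\tau-\kappa);\Lp{2}(V_x,g))$ (strong measurability coming from the continuity in $t$). Since $\inprod{f,h}_{\Lp{2}(V_x,g)}=\inprod{a_x f,h}_{\Lp{2}(V_x,\delta_x)}$, the identity in $w$ that we want lives in exactly the Bochner--Sobolev scale of the test class $W_0(\tau-\kappa)$.

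Next I would integrate by parts in time. Fix $w\in W_0(\tau-\kappa)$. Both $u_\kappa$ and $w$ belong to $\Lp{2}((0,\tau-\kappa);\Lp{2}(V_x,g))$ together with their time derivatives, so by the standard product rule for Hilbert-space-valued Sobolev functions in time (see \cite{CH}, or the exposition in \cite{Kreuter}) the map $t\mapsto\inprod{u_\kappa(t),w(t)}_{\Lp{2}(V_x,g)}$ is absolutely continuous with a.e. derivative $\inprod{\partial_t u_\kappa(t),w(t)}_{\Lp{2}(V_x,g)}+\inprod{u_\kappa(t),\partial_t w(t)}_{\Lp{2}(V_x,g)}$; integrating over $(0,\tau-\kappa)$ and using $w(0)=w(\tau-\kappa)=0$ gives
\[
\int_0^{\tau-\kappa}\inprod{a_x u_\kappa(t),\partial_t w(t)}_{\Lp{2}(V_x,\delta_x)}\,dt=-\int_0^{\tau-\kappa}\inprod{a_x\partial_t u_\kappa(t),w(t)}_{\Lp{2}(V_x,\delta_x)}\,dt.
\]
(Equivalently one may approximate $w$ by $w_j\in\Ck[c]{\infty}((0,\tau-\kappa);\SobH[0]{1}(V_x,g))$ in the $W_0$-norm, integrate by parts classically, and pass to the limit using the $\Lp{2}$-in-time bounds above.)

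Finally, for each fixed $t\in(0,\tau-\kappa)$ the slice $w(t)$ lies in $\SobH[0]{1}(V_x,g)$, so \eqref{eq:testfn1} applied to the solution $u_\kappa$ with test function $v=w(t)$ gives $\inprod{a_x\partial_t u_\kappa(t),w(t)}_{\Lp{2}(V_x,\delta_x)}=\inprod{B_x\nabla u_\kappa(t),\nabla w(t)}_{\Lp{2}(V_x,\delta_x)}$; substituting this under the integral sign produces \eqref{eq:testfn2}. The resulting integrand is measurable in $t$ (pairing of two strongly measurable maps) and lies in $\Lp{1}(0,\tau-\kappa)$ because $B_x$ is bounded by \eqref{eq:coeff} and $\nabla u_\kappa,\nabla w\in\Lp{2}((0,\tau-\kappa);\Lp{2}(V_x,\delta_x))$. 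I expect the only real obstacle to be the integration-by-parts step: one must be careful that the Hilbert-valued product rule and the ensuing boundary-term-free identity are valid when $w$ is merely in $W_0(\tau-\kappa)$ rather than smooth in time, which is where the regularity of $u_\kappa$ from \Cref{lem:shiftsol} and the uniform comparability \eqref{eq:constbd} on the precompact chart $V_x$ — reducing the weighted structure to the flat one — are used.
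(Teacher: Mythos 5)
Your argument is correct and follows essentially the same route as the paper: you establish $u_\kappa\in\Lp{2}((0,\tau-\kappa);\SobH{1}(V_x,g))$ and $\partial_t u_\kappa\in\Lp{2}((0,\tau-\kappa);\Lp{2}(V_x,g))$ from \Cref{lem:shiftsol} (the paper bounds $\norm{\partial_t u_\kappa}$ by a functional-calculus computation rather than by continuity on the compact interval, a cosmetic difference), then apply the Banach-valued integration by parts with $w(0)=w(\tau-\kappa)=0$, and finally insert the slicewise identity \eqref{eq:testfn1} with $v=w(t)$, exactly as in the paper's proof which cites the vector-valued fundamental theorem of calculus from \cite{CH}. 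No gaps worth flagging.
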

\begin{proof}
Fix $w \in W_0$ and from \eqref{eq:soln1}, we have that
$$\inprod{a_x \partial_t u_\kappa (t), w(t)}_{\Lp{2}(V_x, \delta_x)} 
		= \inprod{ B_x \nabla u_\kappa(t), \nabla v(t)}_{\Lp{2}(V_x, \delta_x)}$$
for every $t \in (0, \tau - \kappa)$.
A calculation similar to that in Lemma \ref{lem:shiftsol} 
yields that $\norm{ \partial_t u_\kappa} \leq C_\kappa'$
for all $t \in (0, \tau - \kappa)$
on noting that $\partial_t u_\kappa = \Delta_{g,\cW} u_\kappa$.
This, along with the bound in Lemma \ref{lem:shiftsol}
shows that  $u_\kappa \in \Lp{2}((0, \tau - \kappa); \SobH{1}(V_x, g))$
and that $\partial_t u_\kappa \in \Lp{2}((0,\tau - \kappa); \Lp{2}(V_x, g))$. 

Moreover, note that $W(\tau- \kappa) \subset \SobH{1}((0,\tau-\kappa); \Lp{2}(V_x,g))$
and since in particular $w(0) = w(\tau - \kappa) = 0$, by the fundamental theorem of calculus in the Banach 
valued setting by Proposition 1.4.29, Corollary 1.4.31 and Corollary 1.4.37 in  \cite{CH}, we obtain that
$$\int_{0}^{\tau - \kappa} \inprod{ \partial_t(a_x u_\kappa(t)), w(t)}_{\Lp{2}(V_x,\delta_x)}\ dt
	= - \int_0^{\tau - \kappa} \inprod{a_x u_\kappa(t), \partial_t w(t)}_{\Lp{2}(V_x, \delta_x)}\ dt.$$
Also, the integral on the left is equal to:
$$ \int_0^{\tau - \kappa}  \inprod{ B_x \nabla u_\kappa(t), \nabla v(t)}_{\Lp{2}(V_x, \delta_x)}$$
and since  $\nabla u_\kappa \in \Lp{2}((0, \tau - \kappa); \Lp{2}(V_x,g))$,
the proof is complete.
\end{proof} 

With these three lemmas in hand, we prove the main theorem.

\begin{proof}[Proof of Theorem \ref{thm:Harnack} for $\dim M \geq 3$ using \cite{CS}]
Let $\epsilon > 0$ be sufficiently small to 
be determined later. 
Fix $0 < \kappa < \tau < \infty$ and suppose that $u(x,t) \geq 0$ 
is a positive solution of
the $\Delta_{g,\cW}$ in $(\kappa,\tau)$. 
Then, as before, write $u_\kappa(x,t) = u(x,t+\kappa)$.

Let $D$ be any $n$-dimensional cube inside $\psi_x(V_x)$ then 
on writing $D_x = \psi_x^{1}(D)$, we have  
$$ \int_{D_x} a_x(y)\ d\Leb_x(y) \leq C_x^{\frac{n}{2}} \Leb_x(C),
\ \text{and}\ 
\int_{D_x} \frac{1}{a_x(y)}\ d\Leb_x(y) \leq C_x^{\frac{n}{2}}.$$
Therefore, our density $a_x$ satisfies the so called $A_2$ condition:
$$c_0 := \sup_{D_x} \cbrac{ \frac{1}{\Leb_x(D_x)} \int_{D_x} a_x(y)\ d\Leb_x(y)}
\cbrac{ \frac{1}{\Leb_x(D_x)} \int_{D_x} \frac{1}{a_x(y)}\ d\Leb_x(y)} 
	\leq C_x^n.$$
Combining this with the \eqref{eq:coeff} as
well as \eqref{eq:testfn2} from Lemma \ref{Lem:testfn2} shows that $u_\kappa$
satisfies the hypotheses of Theorem 2.1 in \cite{CS} and
therefore, we obtain a $\gamma = \gamma(C_x, n, \kappa, \tau) > 0$
such that 
\begin{equation} 
\label{eq:harnack2} 
\sup_{(y,s) \in Q_{(x,t-\kappa)}^-(0, \tau - \kappa, \epsilon)} u_\kappa(y,s)
		\leq \gamma \inf_{(y,s) \in Q^+_{(x,t-\kappa)}(0, \tau - \kappa, \epsilon)} u_\kappa(y,s).
\end{equation}
Note now that setting $s' = s + \kappa$, we have that
$(y, s' - \kappa) \in Q^{\pm}_{(x,t- \kappa)}(0, \tau - \kappa, \epsilon)$
if and only if $(y, s') \in Q^{\pm}(x,t)(\kappa, \tau, \epsilon)$.
That is, \eqref{eq:harnack2} is equivalent to the statement 
in the conclusion of the theorem and hence, this concludes
the proof.

Now, to compute a bound for $\epsilon$, note that
we want to ensure 
\begin{multline*}\set{(y,s) \in V_x \times (0, \infty): |y - s| < \epsilon, |t - s| < \epsilon^2} \\
\subset \set{(y,s) \in V_x \times (\kappa, \tau): |y - s| < \epsilon, |t - s| < \epsilon^2}
\end{multline*} 
we note that we require $\kappa < t - \frac{3}{4} \epsilon^2$,
$t + \epsilon^2 < \tau$ and $\epsilon < r_x$.
Rearranging this gives the range of $\epsilon$
in the conclusion.
\end{proof} 

\section{Regularity of solutions}

An important consequence of Harnack estimates for weak 
solutions is that they yield a priori regularity estimates
for those solutions. It is classical fact how these estimates
yield regularity results. For the benefit of the reader, 
we give the following brief outline of how to obtain
regularity from the estimates in Theorem \ref{thm:Harnack}.
We follow the argument of Theorem 6.28 in 
\cite{Lieberman} by Lieberman.

\begin{prop}
Let $u$ be a solution to the $\Delta_{g,\cW}$
heat equation (not necessarily positive).
Fix $x \in M$ and $0 < t_1 < t_2 < \infty$.
Then, there exists an open set $U_x \subset V_x$ 
with $x \in U_x$
such that 
$$ \modulus{u(y,s) - u(y',s)} \leq C(n, C_x, t_1, t_2) d_x(y, y')^\alpha,$$
where 
$$ \alpha = \log_{\frac{1}{4}}\cbrac{1 - \frac{1}{\gamma}}$$
for every $s \in [t_1, t_2]$.
\end{prop}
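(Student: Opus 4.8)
The plan is to derive Hölder continuity from the Harnack inequality of \Cref{thm:Harnack} by the classical oscillation-decay argument, following Theorem 6.28 in \cite{Lieberman}. The key quantitative input is that a single application of the parabolic Harnack estimate lets us compare the supremum of a nonnegative solution on a smaller parabolic cylinder $Q^-$ with its infimum on a slightly later cylinder $Q^+$, with a fixed constant $\gamma$. Applied to the two nonnegative solutions $M_R - u$ and $u - m_R$ (where $M_R,m_R$ denote the sup and inf of $u$ over a parabolic cylinder of spatial scale $R$ around $(x,t)$), this yields that the oscillation of $u$ over a cylinder of scale $R/4$ is at most $(1 - 1/\gamma)$ times the oscillation over the cylinder of scale $R$; iterating over the dyadic scales $R, R/4, R/16,\dots$ gives geometric decay of the oscillation, $\osc_{Q_{R_k}} u \lesssim (1-1/\gamma)^k \osc_{Q_{R_0}} u$, and converting $(1-1/\gamma)^k$ into a power of $R_k = 4^{-k} R_0$ produces exactly the stated exponent $\alpha = \log_{1/4}(1 - 1/\gamma)$.

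In more detail, I would first fix $\kappa, \tau$ with $0 < \kappa < t_1 < t_2 < \tau$ so that the hypotheses of \Cref{thm:Harnack} apply on $(\kappa,\tau)$ — here I use that $u$, being a solution, is nonnegative on a shifted interval only after replacing $u$ by $M_R - u$ or $u - m_R$, which are themselves solutions of the same (linear) heat equation. Then I would choose the precompact open set $U_x \subset V_x$ to be (the $\psi_x$-preimage of) a small Euclidean ball centred at $x$, small enough that all the parabolic cylinders $Q^\pm_{(x,t)}(\kappa,\tau,\epsilon)$ for the relevant dyadic radii $\epsilon = \epsilon_k$ stay inside $V_x \times (\kappa,\tau)$; the admissibility constraint $\epsilon < \min\{\sqrt{t-\kappa}, \sqrt{\tau - t}, r_x\}$ from \Cref{thm:Harnack} dictates how small the base radius $R_0$ must be, and the constant $C(n, C_x, t_1, t_2)$ absorbs $R_0^{-\alpha}$ together with the bound $\osc_{Q_{R_0}} u \le 2\sup|u|$ on the largest cylinder. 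Finally, a chaining argument (nested cylinders centred at possibly different base points, or the standard trick of comparing $u(y,s)$ and $u(y',s)$ through the smallest dyadic cylinder containing both) upgrades the decay estimate at a fixed centre into the two-point Hölder bound $|u(y,s) - u(y',s)| \le C\, d_x(y,y')^\alpha$ for all $s \in [t_1,t_2]$.

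The main obstacle, and the only point requiring genuine care rather than bookkeeping, is the interplay between the \emph{spatial} and \emph{temporal} scalings in the parabolic cylinders $Q^\pm$ defined in \eqref{eq:paradef}: the Harnack inequality compares a cylinder sitting in the time-slab $[t - \tfrac34\epsilon^2, t - \tfrac14\epsilon^2]$ with one in $[t + \tfrac34\epsilon^2, t+\epsilon^2]$, so to iterate one must track how the reference time drifts forward at each step and ensure it never escapes $[t_1,t_2]$ (this is why one needs $t_1, t_2$ strictly inside $\kappa, \tau$ and why the geometric series $\sum_k \epsilon_k^2 = \sum_k 16^{-k} R_0^2$ must converge to something small). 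A secondary subtlety is that the oscillation-decay step requires the two solutions $M_{R} - u$ and $u - m_{R}$ to be nonnegative on the \emph{larger} cylinder while Harnack is applied on the \emph{smaller} one — standard, but it forces the precise $4:1$ ratio between consecutive radii that is responsible for the base-$\tfrac14$ logarithm in $\alpha$. Everything else (measurability, the passage between $g$-distance and the Euclidean $d_x$ via the comparability constant $C_x$, the reduction of a solution in $\dom(\Delta_{g,\cW})$ to a distributional/weak solution on $V_x$ via \Cref{eq:soln1}) is already in place from earlier in the paper.
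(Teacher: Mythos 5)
Your proposal follows essentially the same route as the paper: apply the Harnack estimate of \Cref{thm:Harnack} to the nonnegative solutions $M-u$ and $u-m$ to obtain the oscillation decay $\osc u \leq (1-\tfrac{1}{\gamma})\,\osc u$ between cylinders at scale ratio $4$, then iterate (the paper invokes Lemma 4.6 of \cite{Lieberman}, following Theorem 6.28 there, exactly as you do) to extract $\alpha = \log_{\frac{1}{4}}(1-\tfrac{1}{\gamma})$ and the two-point H\"older bound. Your additional remarks on the forward time drift of the cylinders and on where nonnegativity is needed are consistent with the paper's argument, so no substantive difference remains.
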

\begin{proof}
Let 
$\kappa = \frac{t_1}{2},\ \tau = \frac{3t_2}{2},\ R_0 = \frac{1}{4} \min\set{t_1, r_x},$ fix $r < \frac{R_0}{4}$.
Define, 
\begin{align*}
&M_4 = \sup_{(y,s) \in Q^{-}_{(x,t)}(\kappa, \tau, 4r)} u(y,s) &&m_4 = \inf_{(y,s) \in Q^{-}_{(x,t)}(\kappa, \tau, 4r)} u(y,s) \\
&M_1 = \sup_{(y,s) \in Q^{-}_{(x,t)}(\kappa, \tau, r)} u(y,s) &&m_1 = \inf_{(y,s) \in Q^{-}_{(x,t)}(\kappa, \tau, r)} u(y,s), 
\end{align*} 
and note that $M_j - u$ and $u - m_j$ for $j = 1, 4$ are non-negative
solutions to the $\Delta_{g,\cW}$ heat equation.
By the choice of $R_0$,
we always have that $r < \min\set{ \sqrt{t - \kappa}, \sqrt{\tau - t}, r_x}$
for every $t \in [t_1, t_2]$. 
So, fix such a $t \in [t_1, t_2]$ we invoke Theorem \ref{thm:Harnack} and integrate,
\begin{multline*}
\iint_{Q^-_{(x,t)}(\kappa,\tau,r)} (M_4 - u)\ d\Leb_x dt 
	\leq \gamma \iint_{Q^-_{(x,t)}(\kappa,\tau,r)} \inf_{Q^+_{(x,t)}(\kappa, \tau, r)} (M_4 - u)\ d\Leb_x ds \\
	\leq \gamma (M_4 - M_1) \int_{t- \frac{3}{4}r^2}^{t - \frac{1}{4}r^2} \int_{\psi_x^{-1}(B(x,r))}\ d\Leb_x ds
	= \gamma w_n  \frac{1}{2}r^{2+n} (M_4 - M_1),$$
\end{multline*}
where $w_n$ is the constant for which $\Leb(B(x,r)) = w_n r^n$.
Note that the constant $\gamma$ is independent of $t$, and only 
dependent on $t_1$ and $t_2$ through our choice
of values for $\kappa$ and $\tau$. 
% Note:  $\inf (M_4 - u) = M_4 + \inf(-u) = M_4 - \sup{u} = M_4 - M_1$
By a similar calculation, 
$$
\iint_{Q^-_{(x,t)}(\kappa,\tau,r)} (u - m_4)\ d\Leb_x dt \leq \gamma w_n \frac{1}{2}r^{2+n} (m_1 - m_4),$$
and adding these two inequalities together, 
we get that
$$
\frac{1}{2}r^{2+n} w_n (M_4 - m_4) \leq \gamma w_n \frac{1}{2}r^{2+n} (M_4 - m_4 +m_1 - M_1).$$
Rearranging, we find that 
$$
\osc_{Q^{-}_{(x,t)}(\kappa, \tau, r)} u \leq \cbrac{1 - \frac{1}{\gamma}} \osc_{Q^{-}_{(x,t)}(\kappa, \tau, 4r)} u.$$
We note that the constant $(1 - \gamma{-1}) < 1$
and therefore, it is of the right form to invoke the
standard iteration procedure given in Lemma 4.6 in \cite{Lieberman}. 
More specifically, on noting that
our oscillation estimate is of the form (4.15)'' in \cite{Lieberman},
we obtain the precise form for $\alpha$.

The passage from this to the Hölder estimate
we have noted in the conclusion is immediate. 
\end{proof}

We conclude this paper by noting that the proof of Theorem \ref{Thm:Main}
follows immediately upon collating the facts we have established
in this paper. The regularity result of global continuity then ensures all the notions of almost-everywhere become pointwise everywhere.

% Bibliography
\bibliographystyle{amsalpha}
\providecommand{\bysame}{\leavevmode\hbox to3em{\hrulefill}\thinspace}
\providecommand{\MR}{\relax\ifhmode\unskip\space\fi MR }
% \MRhref is called by the amsart/book/proc definition of \MR.
\providecommand{\MRhref}[2]{%
  \href{http://www.ams.org/mathscinet-getitem?mr=#1}{#2}
}
\providecommand{\href}[2]{#2}

\setlength{\parskip}{0pt}

\end{document}